\newtheorem{theorem}{Theorem}
\newtheorem{lemma}{Lemma}
\newtheorem{remark}{Remark}
\newtheorem{corollary}{Corollary}
\newtheorem{Example}{Example}
\newtheorem{Definition}{Definition}
\begin{document}

\title{On Locally Dyadic Stationary Processes}
\author{Theodoros Moysiadis\\
    Institute of Applied Biosciences, Centre for Research and Technology Hellas\\
    and \\
    Konstantinos Fokianos \\
    Department of Mathematics \& Statistics, University of Cyprus}
\date{ Submitted: April 2015 \\  First Revision: April 2016 \\ Second Revision: September 2016}

\maketitle

\begin{abstract}
\noindent
We introduce the concept of local dyadic stationarity, to account for non-stationary time series, within the framework of Walsh-Fourier analysis. We define and study the time varying dyadic ARMA models (tvDARMA). It is proven that the general tvDARMA process can be approximated locally by either a tvDMA and a tvDAR process.
\end{abstract}

\noindent%
{\it Keywords:}  dyadic stationarity, local stationarity, spectral density, stationarity, Walsh functions, Walsh-Fourier analysis.
\vfill

\newpage

\section{Introduction}
\label{S:Intro}

The concept of stationarity is crucial in the statistical theory of time series analysis, especially for the development of asymptotic theory. However, the assumption of stationarity is often not realistic in applications. For example, a time series can display significant changes through time and therefore stationarity is a questionable assumption. One of the most important consequences, is that attempts to develop asymptotic results are, generally speaking, groundless, since future information of the process does not necessarily contain any information regarding the present of the process. In addition, there is no natural generalization of stationarity to non-stationarity, since non-stationary processes might exhibit trend or/and periodicity and other types of non-standard behaviour.

\citet{priestley1965evolutionary}
considered non-stationary processes whose characteristics are changing slowly over time and developed the theory of evolutionary spectra
(see \citet{priestley1981spectral,priestley1988non}).
However, such an approach makes it difficult to obtain asymptotic results, which are needed for developing estimation theory.
In order to apply standard asymptotic theory for non-stationary processes, Dahlhaus, in a series of contributions, introduced an appropriate theoretical framework, based on the concept of local stationarity (see, for example,
\citet{dahlhaus1996kullback,dahlhaus1997fitting,dahlhaus2000}).
The definition of local stationarity is based on the existence of a time varying spectral representation (\citet{dahlhaus1996kullback}).
\citet{dahlhaus2012locally}
gives an excellent and detailed overview of the theory of locally stationary processes.
A comparison between the methodology developed by Dahlhaus and Priestley is discussed in
\citet{dahlhaus1996asymptotic}.
Some other works related to locally stationary time series include the works by
\citet{granger1964spectral},
\citet{tjostheim1976spectral},
\citet{martin1981line},
\citet{melard1989contributions},
\citet{neumann1997wavelet},
\citet{nason2000wavelet},
\citet{ombao2002slex},
\citet{sakiyama2004discriminant}
and
\citet{davis2006structural},
among others.

The main goal of this contribution is to utilize the idea of local stationarity, in the sense of the above mentioned papers, for studying the spectral behaviour of time series, based on the system of Walsh functions.
These functions led to the development of Walsh-Fourier (square wave) analysis, just like the sinusoidal functions led to Fourier (trigonometric) analysis.
The motivation behind Walsh-Fourier analysis was the need to approximate stationary time series, which display square waveforms with abrupt switches (e.g. in communications and engineering), see \citet{Stankovicetal(2005)} for instance. We introduce the concept of local stationarity but based on the orthogonal system of Walsh functions to account for such phenomena that exhibit, in addition, non-stationary behaviour. We study important general classes of time series, similar in concept with the time varying ARMA (tvARMA) process- see \citet{dahlhaus2012locally}. We anticipate that our theory and methods will be applicable to non-stationary data observed in diverse applications like
pattern recognition for binary images, linear system theory and other (see  \citet{Stankovicetal(2005)}, for more).

The Walsh functions were introduced by
\citet{walsh1923closed}.
They take only two values, $+1$ and $-1$, and have similar properties with the trigonometric functions (although they are not periodic). The introduction of Walsh functions has been followed by a series of papers, related to their mathematical properties and generalizations
(\citet{fine1949walsh}),
which provided the theoretical framework for various applications, see e.g.
\citet{beauchamp1984walsh},
\citet{stoffer1991walsh}
and
\citet{abbasi2012fpga}, among others.
\citet{stoffer1991walsh} gives an excellent account of the history of Walsh functions and a comparison between Walsh and Fourier analysis.

The statistical analysis of stationary time series via Walsh functions has been based on real and dyadic time.
The dyadic time is based on the concept of dyadic addition (see Subsection \ref{S:Preli:SS:DAD}). For time points $m,n$, the real time sum $m+n$ is now replaced by the dyadic sum $m\oplus n.$
\citet{morettin1981walsh}
reviewed work on Walsh spectral analysis in both time scales. Walsh-Fourier analysis of real time stationary processes has been studied by
\citet{kohnI1980spectral,kohnII1980spectral},
\citet{morettin1983note}
and
\citet{stoffer1985central,stoffer1987walsh,stoffer1990multivariate},
among others.
The dyadic time stationarity is defined in respect to the real time stationarity as in Subsection
\ref{S:Preli:SS:DS}
(see also \citet{nagai1977dyadic}).
Further references related to the Walsh-Fourier analysis of dyadic stationary processes are
\citet{morettin1974walsh,morettin1978estimation,morettin1981walsh},
\citet{nagai1980finite},
\citet{nagai1987walsh} and
\citet{taniguchi1989statistical}.
In particular,
\citet{morettin1974walsh,morettin1978estimation}
studied the finite Walsh transform, considered the Walsh periodogram as an estimator of the Walsh spectrum and studied its theoretical properties.
\citet{nagai1977dyadic} proved that a dyadic stationary process has always unique spectral representation in terms of the system of Walsh functions and studied the dyadic linear process (see also \citet{morettin1974walsh}).
\citet{nagai1980finite} also studied dyadic autoregressive and moving average processes and their relation.

In this article, we introduce the concept of local dyadic stationarity and discuss the advantages and the perspectives of such consideration in the framework of Walsh functions.
In Section $2$ of this article, we recall some definitions and review some fundamental results for dyadic stationary processes. In Section $3$, we introduce the concept of local dyadic stationarity and study the time varying dyadic moving average process. In Section $4$, we define the general class of time varying dyadic autoregressive moving average processes and show that they exhibit locally dyadic stationarity. The article concludes with several remarks concerning further research in this topic.

\section{Preliminaries}
\label{S:Preli}

\subsection{Dyadic addition}
\label{S:Preli:SS:DAD}
We recall the definition of dyadic addition and of a dyadic process following \citet{kohnI1980spectral}.
Consider $m$ and $n$ to be non-negative integers that have the following dyadic expansions
\begin{equation}
\label{S:Preli:SS:DAD:E:dyadic expansions}
\nonumber    m=\sum_{k=0}^{f} m_k 2^k, \; n=\sum_{k=0}^{f} n_k 2^k, \quad  \textrm{where} \; m_k, n_k \in \{0,1\}.
\end{equation}
Then, the dyadic sum $m \oplus n$ is defined as
\begin{equation}
\label{S:Preli:SS:DAD:E:dyadic addition}
\nonumber    m \oplus n= \sum_{k=0}^{f} |m_k - n_k| 2^k.
\end{equation}
Consider now $x$ and $y$ to be real numbers that belong to the interval $I\equiv[0,1).$
We write
\begin{equation}
\label{S:Preli:SS:DAD:E:inverse dyadic expansions}
\nonumber    x=\sum_{k=1}^{\infty} x_k 2^{-k}, \; y=\sum_{k=1}^{\infty} y_k 2^{-k}, \quad \textrm{where} \; x_k, y_k \in \{0,1\}.
\end{equation}
In general, each of the above representations is not unique. We follow the convention that if, e.g. $x,$ can be written both through a finite or an infinite order representation, then choose the representation where $x_k=0,\;\forall \, k>k_0$ .
With this convention, the dyadic sum $x \oplus y$ is defined as
\begin{equation}
\label{S:Preli:SS:DAD:E:dyadic sum}
\nonumber    x \oplus y= \sum_{k=1}^{\infty} |x_k - y_k| 2^{-k}.
\end{equation}
Recall that the $k$-$th$ Rademacher function is $\phi_k(x)=(-1)^{x_{k+1}}, \, \forall x \in I, \, \forall \, k \geq 0.$
Then the system of Walsh functions, ${W(n,x),\; n=0,1,2,\ldots,\; x\in I},$ is defined as follows.
If $n=0,$ set $W(0,x)=1, \; \forall x\in I.$
For $n>0$, let  $n=2^{n_{1}}+2^{n_{2}}+\ldots+2^{n_{\nu}},$ where
$0\leq n_1 < n_2 \ldots < n_\nu$ . Then
\begin{equation}
\nonumber W(n,x) = \left \{\begin{array}{cc}
                               1, & n=0,  \\
                               \phi_{n_{1}}(x)\phi_{n_{2}}(x)\cdots \phi_{n_{\nu}}(x), & n>0,  \\
                               \end{array} \right.   \quad \forall x \in I.
\end{equation}
We mention briefly some characteristic properties of the Walsh functions.   \\
(i) The system of Walsh functions is orthonormal in $I$, that is
  \begin{equation}
  \label{S:Preli:SS:DAD:E:orthonormality of WF}
  \nonumber \int_0^1 W(n,x)W(m,x)dx=\left \{\begin{array}{cc}
                               1 & \textrm{for} \quad n=m,  \\
                               0 & \textrm{for} \quad n\neq m, \\
                               \end{array}\right.
  \end{equation}
and constitutes a complete set. If $f(x), \, x \in I$ is a square integrable function, then it can be expanded in a Walsh-Fourier series, i.e.
$$
f(x)=\sum_{n=0}^{\infty}c_n W(n,x),$$
with $c_n= \int_0^1 f(x)W(n,x)dx.$  \\
(ii) $\forall n,m \in \mathbb{N}, x,y \in I, \; W(n,x)W(m,x)=W(n\oplus m,x)\;$ and $\;W(n,x)W(n,y)=W(n,x\oplus y).$

The above properties of Walsh functions motivate the study of stationary time series in terms of this basis. It is well known that a second order stationary process $\{X_t, \, t\in\mathbb{N}\}$ is represented as

\begin{equation}
\label{S:Preli:SS:DAD:E:spectral representation in the Frequency Domain}
\nonumber
X_t=\int_{-\pi}^{\pi} e^{i\lambda t} dZ(\lambda),
\end{equation}
with $\{Z(\lambda), \; \lambda \in (-\infty,\infty) \}$ an orthogonal-increment process such that

\begin{equation}
\label{S:Preli:SS:DAD:E:Orthogonal-increment process}
\nonumber
E\big(dZ(\lambda) d \overline{Z(\mu)}\big)=
\eta(\lambda-\mu) dF(\lambda) d\mu
\end{equation}
\noindent
where $\eta(\cdot)$ is the Dirac function periodically extended to $\mathbb{R}$ with period $2 \pi$ and $F(\cdot)$ is the spectral distribution function (see \citet{brillinger(1974)}, for instance).	

The Walsh functions can be used instead to represent $X_t$ under the concept of dyadic stationarity. There are differences though between real and dyadic stationary processes; see \citet{morettin1981walsh} for further discussion. The concept of dyadic stationarity is explained briefly next.

\subsection{Dyadic stationarity}
\label{S:Preli:SS:DS}

We call a stochastic process $\{X_t, \, t\in\mathbb{N}\}$ dyadic stationary if it has constant mean, finite second moment
and its covariance function
\begin{equation}
\label{S:Preli:SS:DS:E:covariance function}
\nonumber R(n,m)=\textrm{cov}(X_n,X_m)=E\big[(X_n-E[X_n])(X_m-E[X_m])\big],\quad n,m \in \mathbb{N},
\end{equation}
is invariant under dyadic addition, i.e. it depends only on $n\oplus m.$
Hence, we  write for notational convenience $R(\tau)=R(n,n\oplus \tau).$
In the following assume that $E[X_t]=0,\; E[X_t^2]=1,\; \forall t \in \mathbb{N}.$
We recall some important results about dyadic stationary processes.

A dyadic stationary process $X_t$ has a dyadic spectral representation given by
(\citet[p. 193]{morettin1974walsh})
\begin{equation}
\label{S:Preli:SS:DS:E:Nagai spectral representation}
\nonumber    X_t=\int_0^1 W(t,x)dZ_X(x),\quad t\in\mathbb{N},
\end{equation}
where $\{Z_X(x), \; x\in I\}$ is a real random process with orthogonal increments, such that
$$E[dZ_X(x_1) dZ_X(x_2)]=\eta(x_1 \oplus x_2) dG_X(x_1)dx_{2},$$
where $\eta(\cdot)$ is the Dirac function periodically extended to $\mathbb{R}$ with period 1.
The function $G_X(\cdot)$, defined on $I$, is a unique distribution function, which is called the dyadic spectral distribution  of the process $X_t.$ In addition
\begin{equation}
\label{S:Preli:SS:DS:E:Nagai covariance representation}
\nonumber    R(\tau)=\int_0^1 W(\tau,x)dG_X(x).
\end{equation}
If $G_X(\cdot)$ is absolutely continuous, then $dG_X(x)=g_X(x) dx,$ where $g_X(x)$ is called the dyadic spectral density of $X_t.$

\begin{Example} \rm
\label{S:Preli:SS:DS:Ex:white noise}
A simple example of a dyadic stationary process is a sequence $\{\varepsilon_t, \, t\in\mathbb{N}\}$ of independent random variables with $E(\varepsilon_t)=0 \; \textrm{and} \; E(\varepsilon_t^2)=\sigma^2 , \; \forall \;t\in\mathbb{N}$. It is straightforward to show that its covariance function is
\begin{equation}
\label{S:Preli:SS:DS:E:covariance of white noise}
\nonumber    E\left(\varepsilon_n \varepsilon_{n\oplus\tau} \right)= R(\tau)
=\left\{ \begin{array}{ll}
\sigma^2, & \textrm{if}\;\tau=0,    \\
0, & \textrm{if}\;\tau\neq0.
\end{array} \right.
\end{equation}
\end{Example}
\noindent
Since the sequence $\varepsilon_t$ is dyadic stationary, it has a dyadic spectral representation of the form
\begin{equation}
\label{S:Preli:SS:DS:E:spectral representation of iid}
\nonumber    \varepsilon_t=\int_0^1 W(t,x) dZ_\varepsilon(x),\quad t\in\mathbb{N},
\end{equation}
with
$$E[(dZ_\varepsilon(x))^2] = dG_\varepsilon(x) = \sigma^2 dx,\quad x\in I.$$
This example illustrates the analogy of dyadic and real time stationary processes. Indeed, it is well known that a white noise real time process possesses a flat spectrum; the same is true under dyadic stationarity.

A stochastic process $\{X_t, \, t\in\mathbb{N}\}$ is a linear dyadic process if it can be represented as
(\citet{morettin1974walsh})
\begin{equation}
\label{S:Preli:SS:DS:E:infinite DMA}
    X_t=\sum_{k=0}^{\infty}a_k \varepsilon_{t\oplus k},
\end{equation}
where $\varepsilon_t$ is the sequence of i.i.d. variables, as in Example \ref{S:Preli:SS:DS:Ex:white noise}, and $\{a_k, \, k\in\mathbb{N}\}$ are real numbers, which satisfy $\sum_{k=0}^{\infty}a_k^2<\infty.$
This definition is similar in spirit to the definition of the general linear process
\citet[p.415]{priestley1981spectral}.

It can be shown that a linear dyadic process of the form
\eqref{S:Preli:SS:DS:E:infinite DMA}
is dyadic stationary, because
\begin{equation}
\label{S:Preli:SS:DS:E:covariance of linear dyadic process}
    R(\tau)=\int_0^1 W(\tau,x) \left(\sigma \sum_{k=0}^{\infty}a_k W(k,x)\right)^2  dx.
\end{equation}
In addition, it has an absolutely continuous dyadic spectral distribution function and its dyadic spectral density function has the form
\begin{equation}
\label{S:Preli:SS:DS:E:spectral density function of infinite DMA}
g(x)=\sigma^2 \left(\sum_{k=0}^{\infty}a_k W(k,x)\right)^2 = \sigma^2 A^2(x),
\end{equation}
where $A(x)=\sum_{k=0}^{\infty}a_k W(k,x).$ In this case, note that $G(x)= \sigma^{2} \int_{0}^{x} A^{2}(y) dy$, for $x \in I$.
Again, we note the analogy between real time and dyadic stationarity; the above formula is identical to the formula obtained in the real time linear process model
(\citet{priestley1981spectral}).
Furthermore, if $a_q\neq0$ and $a_k=0,\;  \forall \, k>q$ in
\eqref{S:Preli:SS:DS:E:infinite DMA},
then $X_t$ is said to be a dyadic moving average process of order $q$, abbreviated as DMA$(q)$. In general, the process $X_t$ defined by \eqref{S:Preli:SS:DS:E:infinite DMA} is called a DMA$(\infty)$ process.

\section{Local Dyadic Stationarity}
\label{S:LDS}
Consider now
\eqref{S:Preli:SS:DS:E:spectral density function of infinite DMA},
and suppose, for example, that the function $A(\cdot)$ depends upon time, i.e. it has the form $A_{t,T}(\cdot),$ where $T$ denotes the sample size. $X_t$ is now reexpressed as a triangular array 
$X_{t,T}.$
We rescale $A_{t,T}(\cdot)$ from the axis of the first $T$ non negative integers ($t=1,2,\ldots,T$) to the unit interval $I$.
The reason for this rescaling will be clear later on.
The rescaled form of $A_{t,T}(\cdot)$ is denoted by $A\left( t/T,\cdot \right)$.
We give a general definition regarding local dyadic stationarity for a process $X_{t,T}$, in the spirit of Dahlhaus (e.g \citet{dahlhaus1996kullback,dahlhaus1997fitting}).

\begin{Definition}
\label{S:LDS:D:LDS}
A sequence of stochastic processes $\{X_{t,T}, \;t=1,2,\ldots,T\}$ is called locally dyadic stationary with transfer function $A_{t,T}(\cdot)$ and trend function $\mu(\cdot),$ where $A_{t,T}(\cdot)$ and  $\mu(\cdot)$ are deterministic functions, if there exists a representation
\begin{equation}
\label{S:LDS:E:Definition, Spectral representation}
 X_{t,T}=\mu \left(\frac{t}{T}\right)+\int_0^1  W(t,x) A_{t,T}(x)  d U(x),
\end{equation}
where the following hold:\\
(i) $U(x)$ is a real-valued stochastic process on $I$ and
\begin{equation}
\label{S:LDS:E:cumulants}
\nonumber
\textrm{cum}\{dU(x_1),\ldots,dU(x_k)\} = \eta(x_1 \oplus x_2 \oplus\ldots \oplus x_k) g_k(x_1,\ldots,x_{k-1}) dx_1,\ldots,dx_k,
\end{equation}
where $\textrm{cum}\{\ldots\}$ denotes the $k$-${th}$ order cumulant, $g_1 \equiv 0, \; g_2(x_1) \equiv 1, \; |g_k(x_1,\ldots,x_{k-1})|$ are bounded for all $k$ and $\eta(\cdot)$ denotes the
Dirac delta function periodically extended to $\mathbb{R}$ with period 1.
\\
(ii) There exists a constant $K$ and a function $A:[0,1]\times\mathbb{R}\rightarrow \mathbb{R}$ such that
\begin{equation}
\label{S:LDS:E:Definition, assumption (ii)}
\underset{t,x}{\sup} \left|A_{t,T}(x)-A \left(\frac{t}{T},x\right)\right| \leq \frac{K}{T},\quad \forall \;T.
\end{equation}
The functions $A(u,x)$ and $\mu(u)$ are assumed to be continuous with respect to $u=t/T$.
\end{Definition}

The above definition is analogous  to the definition given by
\cite{dahlhaus1996kullback}.
The first condition states that the $U(x)$ has moments of order $k$; the functions $g_k(\cdot)$ are the $(k-1)$-$th$ polyspectrum of $U(x)$
following
\citet{brillinger1965introduction}.
The second assumption requires that the transfer function $A_{t,T}(\cdot)$ is approximated locally by a function $A(t/T,\cdot),$
which is the transfer function of a dyadic stationary process.
Note that the continuity of $A(u,x)$ and $\mu(u)$ in $u$ is required for the process $X_{t,T}$ to exhibit locally dyadic stationary behaviour.
Furthermore, without loss of generality, we assume that $g_{2}(x) \equiv 1$ because  the transfer function can be always rescaled  such that the
process $\{ U(x), x \in I\}$ is white noise. Indeed, the boundedness assumption of $g_{2}(.)$ implies again \eqref{S:LDS:E:Definition, assumption (ii)} for the
rescaled transfer function.

\begin{Example} \rm
Suppose $Y_t$ is a dyadic stationary process with dyadic spectral representation
$$Y_t = \int_0^1  W(t,x) A(x)  d Z(x),$$
where $E|Z^k(x)| < \infty, \, \forall \, k >0$
Define $X_{t,T}$  by
$$X_{t,T} = \mu\left(\frac{t}{T}\right) + \sigma\left(\frac{t}{T}\right)Y_t,$$
where $\mu(\cdot),\sigma(\cdot)$ are continuous functions defined on $I\rightarrow \mathbb{R}$.
Then
$$X_{t,T} = \mu\left(\frac{t}{T}\right) + \int_0^1  W(t,x) A_{t,T}(x)  d Z(x),$$
where $A_{t,T}(x)=A(t/T,x)=\sigma(t/T)A(x)$
and the assumptions $(i)$ and $(ii)$ are satisfied.
Hence $X_{t,T}$ is locally a dyadic stationary process.
\end{Example}

Consider now the process $\{X_{t,T}, \, t=1,2,\ldots,T\}$
\begin{equation}
\label{S:LDS:E:rescaled infinite tvDMA}
X_{t,T}=\sum_{k=0}^{\infty}a_{k,t,T} \varepsilon_{t\oplus k}.
\end{equation}
where $\varepsilon_t$ is an i.i.d. sequence and $\{a_{k,t,T}, \, k\in\mathbb{N}\}$ is a time-dependent process of real numbers such that $\forall t, \; \sum_{k=0}^{\infty}a_{k,t,T}^2<\infty$.
We call this process a time varying dyadic moving average process of infinite order (tvDMA$(\infty)$).
If we set in
\eqref{S:LDS:E:rescaled infinite tvDMA}
$a_{q,t,T}\neq0$ and $a_{k,t,T}=0,\, \forall \, k>q, \, \forall t,$ then we call $X_{t,T}$ a time varying dyadic moving average process of order $q$ (tvDMA$(q)$).
We rescale now the parameter curves $a_{k,t,T}$ to the unit interval $I$, assuming that there exist functions $a_{k}(t/T):I\rightarrow \mathbb{R}$ that satisfy
$a_{k,t,T} \approx a_{k}(t/T).$
We further assume that $a_{k}(\cdot)$ satisfy some regularity conditions (see Remark \ref{S:LDS:R:difference of the a functions}).
The reasons for the rescaling are described in detail, e.g. in
\citet[Sec.2]{dahlhaus2012locally}.
Briefly, suppose that we choose $a_{k,t,T}$ to be polynomials of $t$. Then, as $t\rightarrow\infty$, $a_{k,t,T}\rightarrow\infty$ as well, which violates the condition $\sum_{k=0}^{\infty}a_{k,t,T}^2<\infty.$ In addition, rescaling enables us to impose smoothing conditions through the continuity of the functions $a_{k}(\cdot),$ ensuring that the process exhibits locally dyadic stationary behaviour. Indeed,
the number of observations within the neighbourhood of a fixed point $u_0 \in I$ increases as $T\rightarrow\infty$ enabling to develop and apply locally for $X_{t,T}$ asymptotic results for dyadic stationary processes.
Suppose that the process $X_{t,T}$ defined by \eqref{S:LDS:E:rescaled infinite tvDMA} is written as
\begin{equation}
\label{S:LDS:E:rescaled infinite tvDMA_2nd}
\nonumber    X_{t,T}=\sum_{k=0}^{\infty}a_k\left(\frac{t}{T}\right) \varepsilon_{t\oplus k}.
\end{equation}
We assume that $a_k(u)=a_k(0)$ for $u<0$ and $a_k(u)=a_k(1)$ for $u>1$ and that the functions $a_{k}(\cdot)$ satisfy some  standard smoothness conditions; see \citet{dahlhaus1997fitting}.
Consider now a fixed point $u_0=t_0/T$ and its neighborhood $[u_0\pm\epsilon].$ If the length of this segment is sufficiently  small, the process 
$X_{t,T}$ can be approximated by the process $\tilde{X}_t(u_0),$ which is defined as
\begin{equation}
\label{S:LDS:E:DS infinite tvDMA}
\nonumber    \tilde{X}_t(u_0)=\sum_{k=0}^{\infty}a_k(u_0) \varepsilon_{t\oplus k},
\end{equation}
where $a_k(u_0)$ are constants, with $u_0$ indicating their dependence from the fixed point $u_0$ (see also
\citet{dahlhaus2012locally}).
$\tilde{X}_t(u_0)$ is dyadic stationary. Indeed, we can write
\begin{eqnarray}
\label{S:LDS:E:spectral representation of infinite tvDMA at a fixed point}
 \tilde{X}_t(u_0)&=& \int_0^1  W(t,x)\left( \sum_{k=0}^{\infty}a_k(u_0) W(k,x)\right) d Z_\varepsilon(x)=\int_0^1  W(t,x) A(u_0,x)
 d Z_\varepsilon(x),
\end{eqnarray}
where $A(u_0,x)=\sum_{k=0}^{\infty}a_k(u_0) W(k,x).$

From equations \eqref{S:Preli:SS:DS:E:covariance of linear dyadic process}
and \eqref{S:Preli:SS:DS:E:spectral density function of infinite DMA},
$\tilde{X}_t(u_0)$
has covariance function
\begin{equation}
\nonumber \label{S:LDS:E:local covariance of infinite tvDMA}
    R(u_0, \tau)=\int_0^1 W(\tau,x) \left(\sigma \sum_{k=0}^{\infty}a_k(u_0) W(k,x)\right)^2 dx,
\end{equation}
and a unique dyadic spectral density function given by
\begin{equation}
\label{S:LDS:E:local spectral density function of rescaled infinite tvDMA}
\nonumber
g\left(u_0,x\right)= \left(\sigma \sum_{k=0}^{\infty}a_k \left(u_0\right) W(k,x)\right)^2=\sigma^2 A^2\left(u_0,x\right),
\end{equation}
where $u_0$ indicates the dependence from a fixed point.

We can show that for
$\{ u=t/T:|t/T-u_0|\leq \epsilon \},$
it holds
$|X_{t,T}-\tilde{X}_t(u_0)|= O_P(1/T)$,
see Corollary \ref{S:LDS:C:LDS for tvDMA infty}.
Therefore, we can say that $X_{t,T}$ has locally the same covariance and dyadic spectral density function as $\tilde{X}_t(u_0)$ and therefore exhibits locally dyadic stationary behaviour.
Note that the tvDMA$(\infty)$ process $X_{t,T}$ in
\eqref{S:LDS:E:rescaled infinite tvDMA}
is locally dyadic stationary due to Definition
\ref{S:LDS:D:LDS},
since it has a time varying spectral representation as in
\eqref{S:LDS:E:Definition, Spectral representation}. Indeed, we have
\begin{eqnarray}
\label{S:LDS:E:spectral representation of infinite tvDMA}
\nonumber
    X_{t,T}&=&\sum_{k=0}^{\infty}  \left(a_{k,t,T} \int_0^1 W(t\oplus k,x)d Z_\varepsilon(x)  \right)
             =\int_0^1  W(t,x) A_{t,T}(x)  d Z_\varepsilon(x)  ,
\end{eqnarray}
where $A_{t,T}(x)=\sum_{k=0}^{\infty}a_{k,t,T}W(k,x)$ is the time varying transfer function.
We show in Theorem \ref{S:LDS:T:approximation by DS process} that, in general, a locally dyadic stationary process is approximated by a dyadic stationary process within a given interval.

\begin{theorem}
\label{S:LDS:T:approximation by DS process}
Suppose that $\{X_{t,T}, \;t=1,2,\ldots,T\}$ is a sequence of stochastic processes  which satisfy a representation of the form
\eqref{S:LDS:E:Definition, Spectral representation}
where $A_{t,T}(x)$ is the time varying transfer function (set $\mu \left(t/T\right)=0$).
Suppose that $\{\tilde{X}_{t}(u_0), \;t=1,2,\ldots,T\}$ is a dyadic stationary process with

\begin{equation}
\label{S:LDS:E:spectral representation of infinite tvDMA at a fixed point_Theorem}
\tilde{X}_{t}(u_0)=\int_0^1  W(t,x) A(u_0,x)  d U(x),
\end{equation}
where $A(u_0,x)$
depends on the fixed point $u_0 \in I$.
Then within an interval $(u_0\pm \epsilon)$ and under the assumptions of Definition
\ref{S:LDS:D:LDS}
it holds that
\begin{equation}
\label{S:LDS:E:approximation by DS process}
\nonumber    |X_{t,T}-\tilde{X}_t(u_0)|= O_P(1/T).
\end{equation}
\end{theorem}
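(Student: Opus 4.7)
The plan is to subtract the two spectral representations, obtain a single stochastic integral whose deterministic kernel is small in sup-norm, and then pass from an $L^{2}$ bound to $O_{P}(1/T)$ via Chebyshev's inequality.

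First, I would use linearity of the stochastic integral together with the representation in Definition \ref{S:LDS:D:LDS} (with $\mu \equiv 0$) and the representation \eqref{S:LDS:E:spectral representation of infinite tvDMA at a fixed point_Theorem}, both driven by the same orthogonal-increment process $U$, to write
\begin{equation}
\nonumber
X_{t,T} - \tilde{X}_{t}(u_{0}) \;=\; \int_{0}^{1} W(t,x)\bigl[A_{t,T}(x) - A(u_{0},x)\bigr]\, dU(x).
\end{equation}
The problem thus reduces to controlling the deterministic kernel $A_{t,T}(x) - A(u_{0},x)$ uniformly in $x$.

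Next, I would split this kernel by the triangle inequality,
\begin{equation}
\nonumber
|A_{t,T}(x) - A(u_{0},x)| \;\le\; |A_{t,T}(x) - A(t/T,x)|\, +\, |A(t/T,x) - A(u_{0},x)|.
\end{equation}
The first term is at most $K/T$ by condition (ii) of Definition \ref{S:LDS:D:LDS}. For the second, I would invoke the standard smoothness (Lipschitz-in-$u$) hypothesis on $A(u,x)$ referenced in the remark following Definition \ref{S:LDS:D:LDS} and borrowed from \citet{dahlhaus1997fitting}, so that $|A(t/T,x) - A(u_{0},x)| \le L|t/T - u_{0}| \le L\epsilon$ uniformly in $x$. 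Taking $\epsilon = O(1/T)$, which is the regime in which the local approximation is meaningful, I obtain
\begin{equation}
\nonumber
\sup_{x \in I} |A_{t,T}(x) - A(u_{0},x)| \;=\; O(1/T).
\end{equation}

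Then I would compute the second moment. By condition (i) of Definition \ref{S:LDS:D:LDS} with $k=2$ and $g_{2}\equiv 1$, and since $W(t,x)^{2} = 1$ for all $t,x$, orthogonality of the increments of $U$ gives
\begin{equation}
\nonumber
E\bigl(X_{t,T} - \tilde{X}_{t}(u_{0})\bigr)^{2} \;=\; \int_{0}^{1} \bigl[A_{t,T}(x) - A(u_{0},x)\bigr]^{2}\, dx \;=\; O(1/T^{2}).
\end{equation}
A direct application of Chebyshev's (Markov's) inequality then converts this $L^{2}$ bound into the announced $|X_{t,T} - \tilde{X}_{t}(u_{0})| = O_{P}(1/T)$.

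The main obstacle is the second step: Definition \ref{S:LDS:D:LDS} by itself only asserts continuity of $A(\cdot,x)$ in its first argument, which is not enough to yield a $1/T$ rate. One must invoke the Lipschitz (or differentiable) regularity of $u \mapsto A(u,x)$, which is the \emph{standard smoothness} referred to in Remark \ref{S:LDS:R:difference of the a functions} and in \citet{dahlhaus1997fitting}, and one must also implicitly restrict the neighborhood radius to $\epsilon = O(1/T)$; without either ingredient the rate degrades to $O_{P}(\epsilon + 1/T)$. Once these regularity assumptions are in force, however, the argument is a short combination of the triangle inequality, the defining bound (ii), and the $L^{2}$ isometry of the stochastic integral.
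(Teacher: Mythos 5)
Your proof is correct and follows the same route as the paper's: both subtract the two spectral representations driven by the common process $U$, apply the triangle inequality to split $|A_{t,T}(x)-A(u_0,x)|$ into the $K/T$ term coming from condition (ii) of Definition \ref{S:LDS:D:LDS} and the remainder $|A(t/T,x)-A(u_0,x)|$, and then convert a moment bound into $O_P(1/T)$. Where you differ is in rigor, and at both points your version is the stronger one. First, the paper bounds the difference pathwise by $\int_0^1 |A_{t,T}(x)-A(u_0,x)|\,dU(x)$, treating $dU$ as if it were a positive measure and then taking expectations; your use of the $L^2$ isometry from condition (i) with $k=2$, $g_2\equiv 1$, followed by Chebyshev, is the clean way to control the stochastic integral. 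Second, the paper handles the term $|A(u,x)-A(u_0,x)|$ by continuity alone (``for any $\epsilon'>0$ choose $\epsilon$ so that this term is $<\epsilon'$'') and then asserts that the total is $\le K^*/T$ --- a non sequitur, since an arbitrarily small but \emph{fixed} $\epsilon'$ does not decay at rate $1/T$. You identify exactly this gap and close it by invoking Lipschitz regularity of $u\mapsto A(u,x)$ (the ``standard smoothness conditions'' borrowed from Dahlhaus) together with shrinking the neighbourhood as $\epsilon=O(1/T)$; without one of these ingredients the rate degrades to $O_P(\epsilon+1/T)$, as you correctly note. In short, your proposal is essentially a repaired version of the paper's own argument.
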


\begin{proof}
From equations
\eqref{S:LDS:E:Definition, Spectral representation}
and
\eqref{S:LDS:E:spectral representation of infinite tvDMA at a fixed point_Theorem}
we have that

\begin{eqnarray}
\label{S:LDS:E:approximation by DS process 2}
\nonumber |X_{t,T}-\tilde{X}_t(u_0)| &=& \left| \int_0^1  W(t,x) A_{t,T}(x) d U(x) - \int_0^1  W(t,x) A(u_0,x)  d U(x) \right|       \\
\nonumber &\leq&  \int_0^1  \left| W(t,x) \right| \cdot \left| A_{t,T}(x)-  A(u_0,x) \right| d U(x)       \\
          &=&   \int_0^1  \left| A_{t,T}(x)-  A(u_0,x) \right| d U(x),
\end{eqnarray}
since $W(t,x) \in \{-1,1\}.$
In addition
\begin{eqnarray}
\label{S:LDS:E:transfer functions inequality 1}
\nonumber \left| A_{t,T}(x)-  A(u_0,x) \right| &\leq& \left| A_{t,T}(x)-  A\left( \frac{t}{T},x \right) \right| +
\left| A\left( \frac{t}{T},x \right) - A(u_0,x) \right|      \\
 &\leq&   \frac{K}{T}  + \left| A\left( u,x \right) - A(u_0,x) \right|,
\end{eqnarray}
from \eqref{S:LDS:E:Definition, assumption (ii)} in assumption (ii) of Definition \ref{S:LDS:D:LDS}. However, the same assumption states that $A(u,x)$ is continuous. Therefore, since $\{ u=t/T:|t/T-u_0|\leq \epsilon \}$ and for any $\epsilon^\prime>0$ we can choose $\epsilon>0$ to be such that $|A\left( u,x \right) - A(u_0,x)|<\epsilon^\prime,$
\eqref{S:LDS:E:transfer functions inequality 1} becomes
\begin{equation}
\label{S:LDS:E:transfer functions inequality 2}
\left| A_{t,T}(x)-  A(u_0,x) \right| \leq \frac{K^\ast}{T},
\end{equation}
for some positive constant $K^\ast.$
Finally, from \eqref{S:LDS:E:approximation by DS process 2} and \eqref{S:LDS:E:transfer functions inequality 2}, we obtain that
\begin{equation}
\label{S:LDS:E:approximation by DS process with the limit}
\nonumber E|X_{t,T}-\tilde{X}_t(u_0)| = O(1/T),
\end{equation}
and hence we have the desired result.
\end{proof}

\begin{figure}
\begin{minipage}[h]{1\linewidth}
\centering
  \begin{tabular}{@{}cc@{}}
    \includegraphics[width=.5\textwidth]{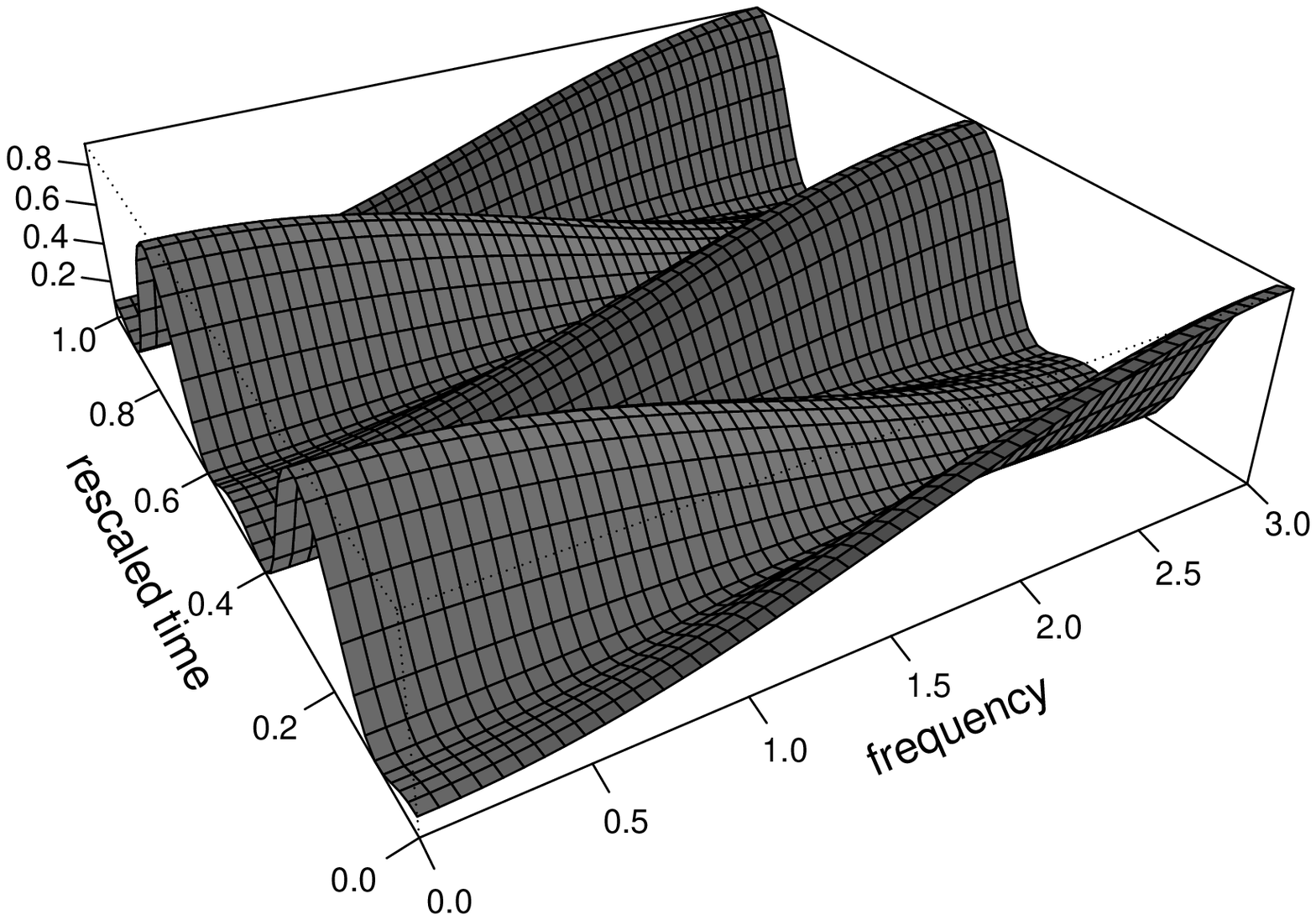} &
    \includegraphics[width=.5\textwidth]{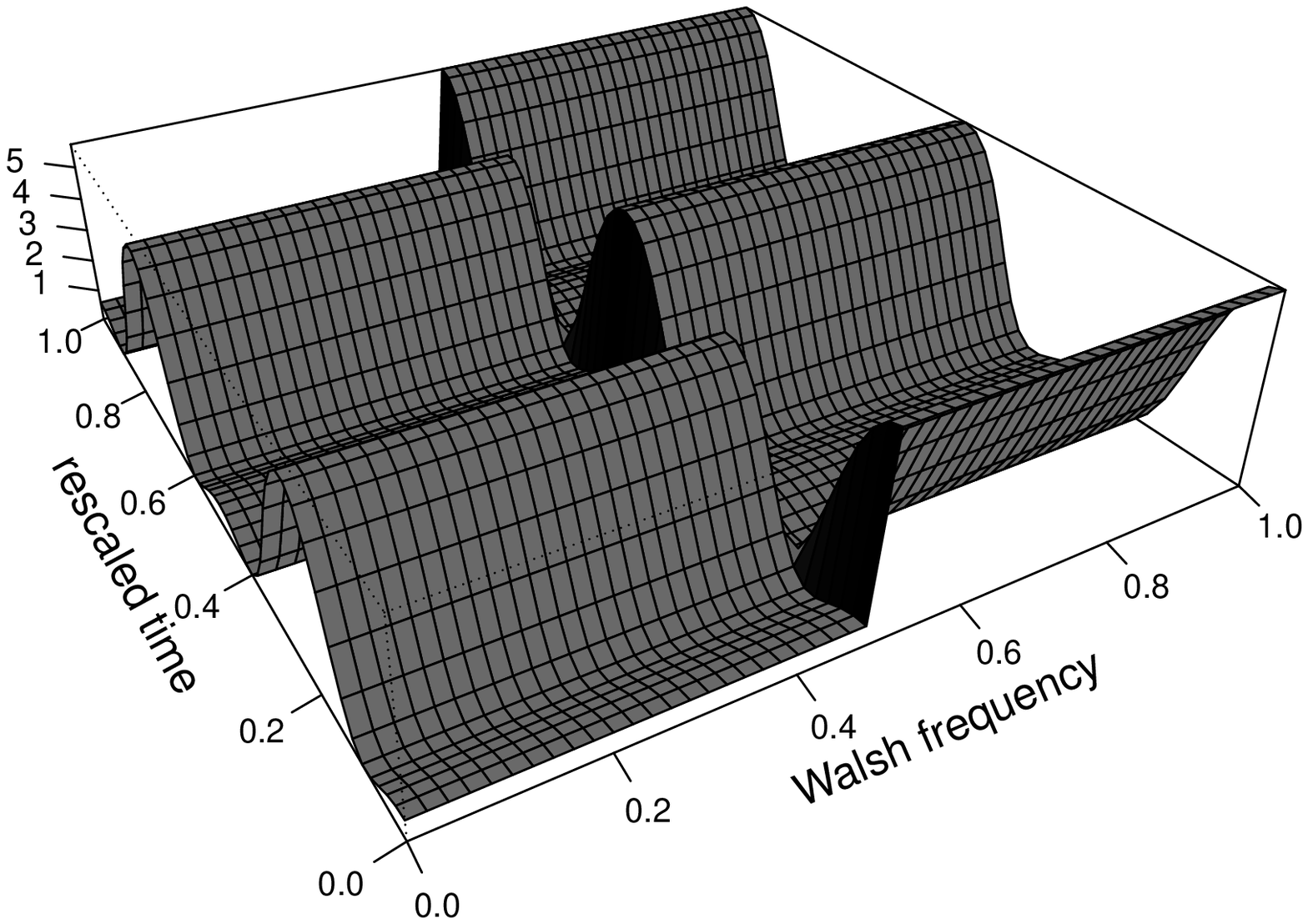}
    \end{tabular}
  \caption{The time varying spectral density function for the tvMA($1$) process (left) and the time varying dyadic spectral density function for the tvDMA($1$) process (right).}
\label{S:LDS:F:tvMA(1) Vs tvDMA(1)}
\end{minipage}
\end{figure}

\begin{corollary}
\label{S:LDS:C:LDS for tvDMA infty}	
Theorem
\ref{S:LDS:T:approximation by DS process}
holds for the tvDMA$(\infty)$ process $X_{t,T},$ defined by
\eqref{S:LDS:E:rescaled infinite tvDMA},
since this process satisfies the assumptions of Theorem
\ref{S:LDS:T:approximation by DS process},
for $\tilde{X}_t(u_0)$ given by
\eqref{S:LDS:E:spectral representation of infinite tvDMA at a fixed point}.

\end{corollary}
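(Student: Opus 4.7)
My plan is to verify that the tvDMA$(\infty)$ process $X_{t,T}$ defined in \eqref{S:LDS:E:rescaled infinite tvDMA} satisfies the hypotheses of Theorem \ref{S:LDS:T:approximation by DS process} with $U=Z_\varepsilon$ and with the approximating process $\tilde X_t(u_0)$ given in \eqref{S:LDS:E:spectral representation of infinite tvDMA at a fixed point}. Once the two assumptions of Definition \ref{S:LDS:D:LDS} are checked for $X_{t,T}$, the bound $|X_{t,T}-\tilde X_t(u_0)|=O_P(1/T)$ becomes an immediate consequence of Theorem \ref{S:LDS:T:approximation by DS process}.

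First, I would recast $X_{t,T}$ into the spectral form \eqref{S:LDS:E:Definition, Spectral representation}. Substituting $\varepsilon_{t\oplus k}=\int_0^1 W(t\oplus k,x)\,dZ_\varepsilon(x)$, using the multiplicative Walsh identity $W(t\oplus k,x)=W(t,x)W(k,x)$, and interchanging sum and integral (justified by $\sum_k a_{k,t,T}^2<\infty$ together with the $L^2$-orthonormality of the $W(k,\cdot)$) one obtains
$$X_{t,T}=\int_0^1 W(t,x)\,A_{t,T}(x)\,dZ_\varepsilon(x),\qquad A_{t,T}(x)=\sum_{k=0}^\infty a_{k,t,T}\,W(k,x).$$
Condition (i) of Definition \ref{S:LDS:D:LDS} then holds for $U=Z_\varepsilon$ because the $\varepsilon_t$ are i.i.d., so the higher-order cumulants of the increments of $Z_\varepsilon$ factor through the Dirac mass on the dyadic diagonal with bounded polyspectra; the normalisation $g_{2}\equiv 1$ is achieved by absorbing $\sigma$ into the transfer function as explained right after Definition \ref{S:LDS:D:LDS}.

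The only substantive step, and the one I expect to be the main obstacle, is verifying condition (ii). Setting $A(u,x)=\sum_{k=0}^\infty a_k(u)\,W(k,x)$ and using $|W(k,x)|=1$, one has the clean pointwise bound
$$\Bigl|A_{t,T}(x)-A\!\left(\tfrac{t}{T},x\right)\Bigr|\;\leq\;\sum_{k=0}^\infty \bigl|a_{k,t,T}-a_k(t/T)\bigr|.$$
To turn this into $\sup_{t,x}|A_{t,T}(x)-A(t/T,x)|\leq K/T$ I must invoke the standing regularity hypotheses on the parameter curves announced before the corollary, namely the approximation relation $a_{k,t,T}\approx a_k(t/T)$ together with smoothness and decay conditions on the $a_k(\cdot)$ of the type imposed in \citet{dahlhaus1997fitting} on tvARMA coefficients. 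These should deliver both a per-index defect of order $1/T$ and a $k$-summable envelope, so that the series above is $O(1/T)$ uniformly in $t$ and $x$. Once \eqref{S:LDS:E:Definition, assumption (ii)} is in place, Theorem \ref{S:LDS:T:approximation by DS process} applies verbatim and yields the stated approximation on $\{t:|t/T-u_0|\leq\epsilon\}$.
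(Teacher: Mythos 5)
Your proposal is correct and follows essentially the same route as the paper, which justifies the corollary by the discussion immediately preceding it: the same rewriting of $X_{t,T}$ as $\int_0^1 W(t,x)A_{t,T}(x)\,dZ_\varepsilon(x)$ with $A_{t,T}(x)=\sum_k a_{k,t,T}W(k,x)$, and Remark \ref{S:LDS:R:difference of the a functions} linking assumption (ii) to the coefficient curves $a_{k,t,T}$. If anything, you are more explicit than the paper about the need for a $k$-summable envelope on $|a_{k,t,T}-a_k(t/T)|$ to pass from per-coefficient bounds to the uniform bound on the transfer function, a point the paper leaves implicit in its standing smoothness conditions.
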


\begin{remark}
Theorem \ref{S:LDS:T:approximation by DS process} implies that a locally dyadic stationary process could be approximated by dyadic stationary processes within different intervals in $I$ (that may overlap). Thus its behaviour could be described via the behaviour of those dyadic stationary processes.
\end{remark}

\begin{remark}
\label{S:LDS:R:difference of the a functions}
Equation
\eqref{S:LDS:E:Definition, assumption (ii)} implies a similar assumption for the $\underset{t,x}{\sup}\left|a_{k,t,T}-a_k \left(t/T\right)\right|$ and the above discussion still holds.
\end{remark}

\begin{Example} \rm
Consider, for example, the infinite time varying MA (tvMA$(\infty)$) representation
$X_{t,T}=\sum_{k=0}^{\infty}a_{k,t,T} \varepsilon_{t- k},$ in the real time.
Then its time varying spectral density function is given by
$$f\left(u,\lambda\right)=(\sigma^2/2\pi) \left(\sum_{k=0}^{\infty}a_k(u)\exp(-i\lambda k)\right)^2.$$
Respectively, the time varying dyadic spectral density function of the tvDMA$(\infty)$ is given by
$$g\left(u,x\right)=
\left(\sigma \sum_{k=0}^{\infty}a_k(u) W(k,x)\right)^2.$$
We compare the behaviour of functions $g(u,x)$ and $f(u,\lambda)$ for the same order of the respective processes and for the same representation of the time varying coefficients $a_k(u)$ (set $\sigma^2=1$).
Figure \ref{S:LDS:F:tvMA(1) Vs tvDMA(1)} shows the spectral density function of a tvMA$(1)$ and  tvDMA$(1)$ processes. We set
$a_0(u)=-1.8\cos(1.5-\cos(4\pi u))$ and $a_1(u)=0.81.$
Figure \ref{S:LDS:F:tvMA(2) Vs tvDMA(2)} shows the spectral density function of a tvMA$(2)$ and tvDMA$(2)$ processes. In this case we set
$a_0(u)=1.2\cos(2\pi u), \; a_1(u)=2\cos(1.5-\cos(8\pi u))$ and $a_2(u)=u.$
Both figures reveal the differences between real and dyadic stationarity. The square waveform of Walsh functions allows a more oscillatory behaviour of the dyadic spectral density function.
\end{Example}

%

\begin{figure}
\begin{minipage}[h]{1\linewidth}
\centering
  \begin{tabular}{@{}cc@{}}
    \includegraphics[width=.5\textwidth]{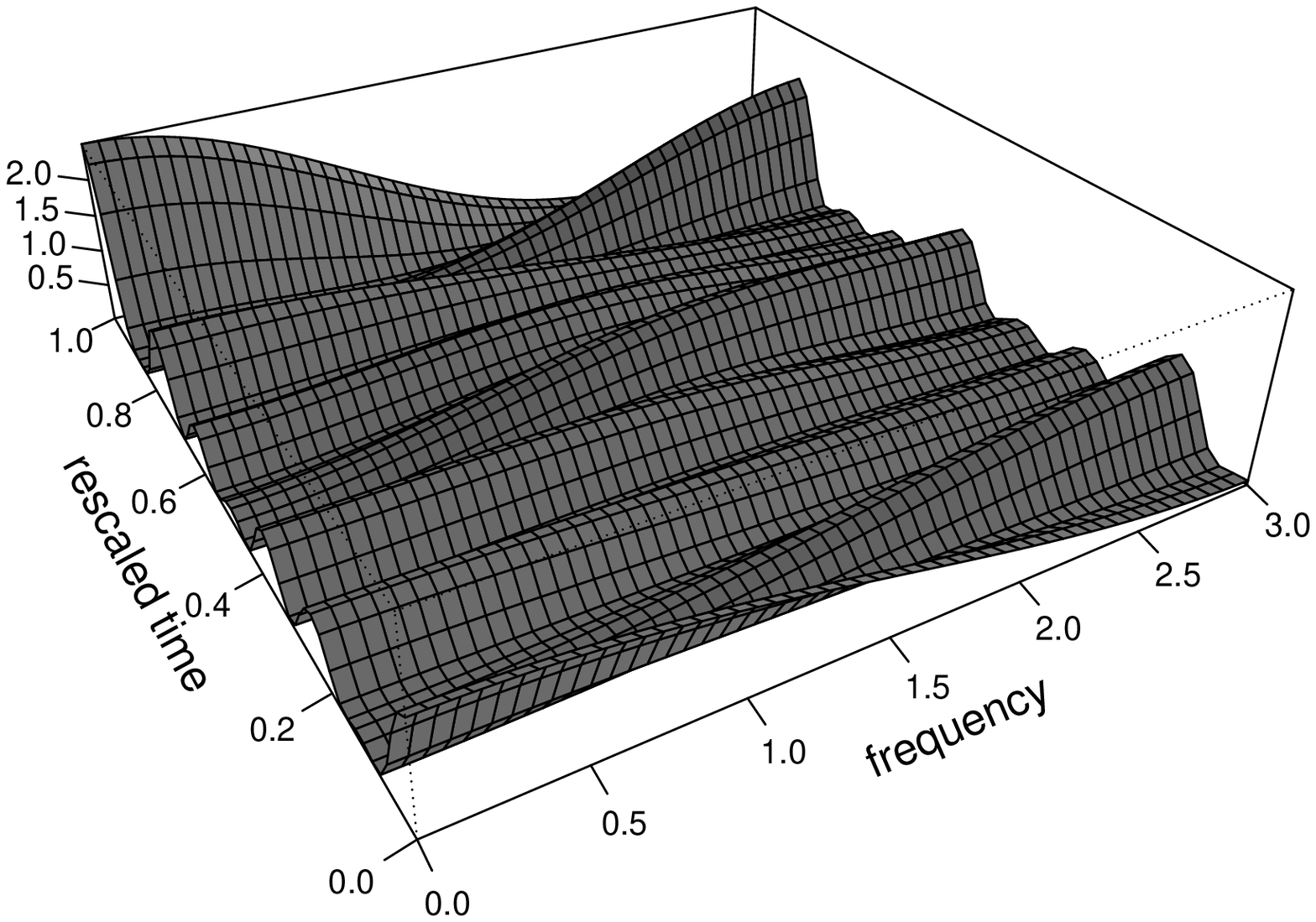} &
    \includegraphics[width=.5\textwidth]{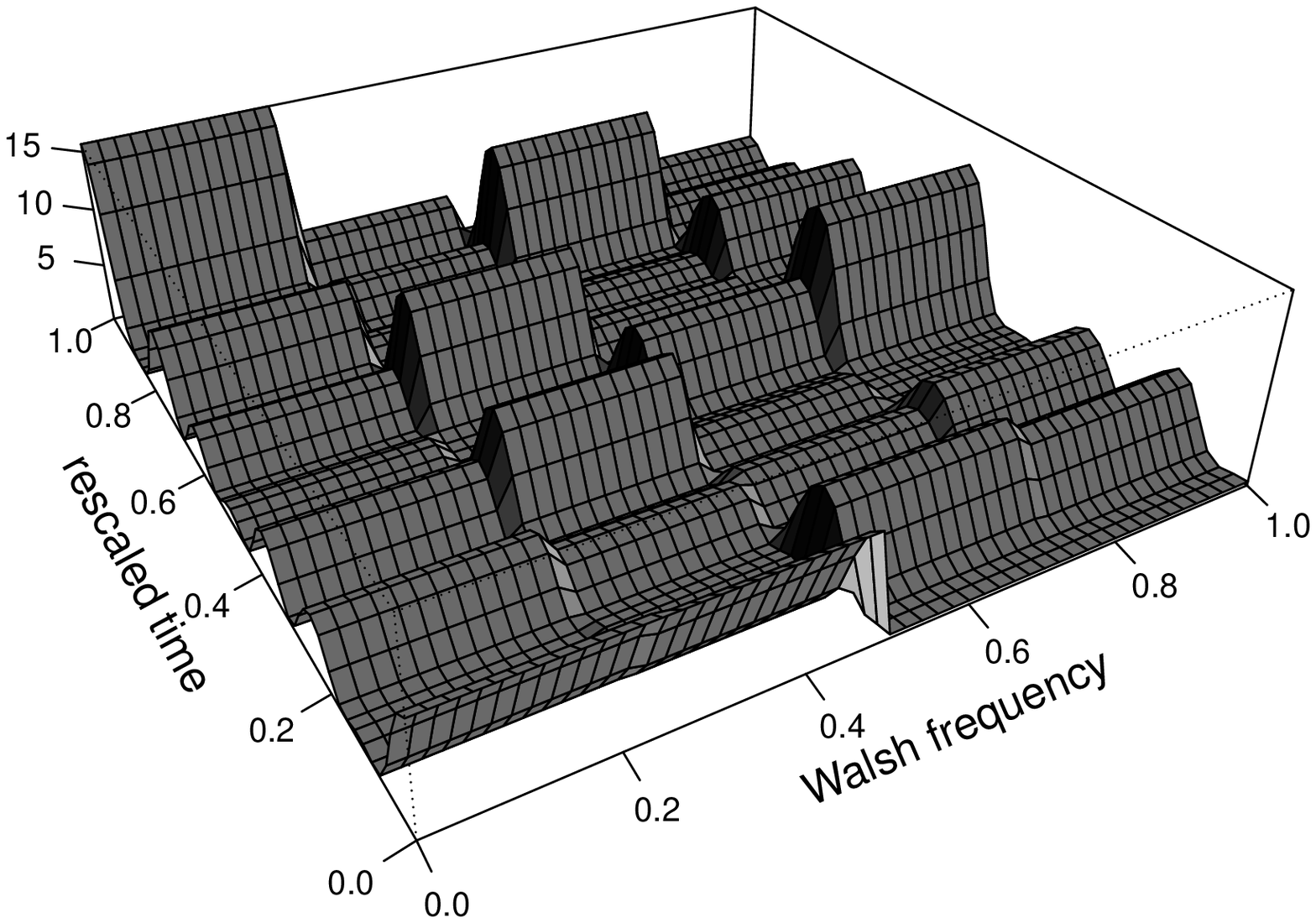}
    \end{tabular}
  \caption{The time varying spectral density function for the tvMA($2$) process (left) and the time varying dyadic spectral density function for the tvDMA($2$) process (right).}
\label{S:LDS:F:tvMA(2) Vs tvDMA(2)}
\end{minipage}
\end{figure}

\section{tvDARMA processes}
\label{S:tvDARMA}

It is well known that autoregressive, moving average, and ARMA models can be regarded as special cases of the general linear process.
\citet{nagai1980finite}
shows that a dyadic autoregressive process of finite order is always inverted into a dyadic moving average process of
finite order, and vice versa.
We obtain similar results, but within a time varying framework.
We define the time varying, dyadic, autoregressive, moving average (tvDARMA) process as follows.
\begin{Definition}
A stochastic process $\{X_t,t=1,2,\ldots,T\}$ is called tvDARMA$(p,r)$ if it is locally dyadic stationary and can be represented by

\begin{equation}
\label{S:tvDARMA:E:tvDARMA rescaled}
\sum_{k=0}^p b_{k,t,T}X_{t\oplus k,T} = \sum_{n=0}^r a_{n,t,T} \, \varepsilon_{t\oplus n},
\end{equation}
where $p,r \in \mathbb{Z}^{+}$ with $p=2^m-1,\;r=2^f-1,$ the sequences of parameters $\{b_{k,t,T}\}_{k=0,1,\ldots,p},$ $\{a_{n,t,T}\}_{n=0,1,\ldots,p}$ are real numbers with at least two non-zero parameters $b_{k_0,t,T},a_{n_0,t,T} \,$ for $2^{m-1}\leq k_0 \leq 2^m-1$ and $2^{f-1}\leq n_0 \leq 2^f-1.$
In addition, $\{\varepsilon_t,t=1,2,\ldots,T\}$
is an i.i.d. sequence with $E(\varepsilon_t)=0$ and $E(\varepsilon_t^2)=\sigma^2$.
\end{Definition}

Assume that $b_{0,t,T}=a_{0,t,T}=1.$
If we set in
\eqref{S:tvDARMA:E:tvDARMA rescaled}, $p=0$,
then the tvDMA process arises as in
\eqref{S:LDS:E:rescaled infinite tvDMA},
but for a finite order $r$. In case we set in \eqref{S:tvDARMA:E:tvDARMA rescaled} $r=0$, then \eqref{S:tvDARMA:E:tvDARMA rescaled} becomes
\begin{equation}
\label{S:tvDARMA:E:tvDARp rescaled}
\sum_{k=0}^p b_{k,t,T} X_{t\oplus k, T}=\varepsilon_t.
\end{equation}
We call $X_{t,T}$ in \eqref{S:tvDARMA:E:tvDARp rescaled} a time varying, dyadic, autoregressive process of order $p$ (tvDAR$(p)$). 
We show that a tvDAR process, and even more generally, a tvDARMA process, can be approximated by a tvDMA process. 
Following 
\cite{nagai1987walsh}, who study multivariate dyadic stationary processes, set 
\begin{equation}
\label{S:tvDARMA:E:Phi}
    \phi_{t,T}(x)=\sum_{j=0}^p b_{j,t,T} W(j,x),\;x\in I,
\end{equation}
where $p=2^m-1, m\in \mathbb{N}$ and $\{b_{j,t,T}\}_{j=0,1,\ldots,p}$ are real numbers.
Denote by $\Sigma_{t,T}$ the $(p+1)\times (p+1)$ matrix, which is given by

\begin{equation}
\label{S:tvDARMA:M:Sigma matrix}
\nonumber
    \Sigma_{t,T}=\left(
      \begin{array}{cccc}
        b_{0\oplus 0,t,T} & b_{0\oplus 1,t,T} & \cdots & b_{0\oplus p,t,T} \\
        b_{1\oplus 0,t,T} & b_{1\oplus 1,t,T} & \cdots & b_{1\oplus p,t,T} \\
        \vdots & \vdots & \ddots & \vdots \\
        b_{p\oplus 0,t,T} & b_{p\oplus 1,t,T} & \cdots & b_{p\oplus p,t,T} \\
      \end{array}
    \right).
\end{equation}

\begin{lemma}
\label{S:tvDARMA:L:Determinant}
The following equation holds
\begin{equation}
\label{S:tvDARMA:E:Determinant}
\nonumber    \det[\Sigma_{t,T}]=\prod_{j=0}^p \phi_{t,T}(x_j),
\end{equation}
where $x_j = j/(p+1), \; j=0,1,\ldots,p.$
Therefore the function $\phi_{t,T}(x)\neq 0$ if and only if $\det[\Sigma_{t,T}]\neq 0.$
\end{lemma}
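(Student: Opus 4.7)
The plan is to diagonalise $\Sigma_{t,T}$ explicitly by exhibiting a full set of eigenvectors built from Walsh functions evaluated at the dyadic points $x_j=j/(p+1)$. Writing $p+1=2^m$, the key observation is that the matrix entry $b_{i\oplus l,t,T}$ defines a \emph{dyadic convolution} on $\{0,1,\dots,p\}$ (since both $i,l\le 2^m-1$ force $i\oplus l\le 2^m-1$), and such convolutions are known to be diagonalised by the Walsh--Hadamard basis.

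Concretely, for each $j\in\{0,1,\dots,p\}$ I would define the column vector
\begin{equation}
\nonumber
v_j=\bigl(W(0,x_j),\,W(1,x_j),\,\ldots,\,W(p,x_j)\bigr)^{\!\top},
\end{equation}
and compute the $i$-th entry of $\Sigma_{t,T}v_j$:
\begin{equation}
\nonumber
\bigl(\Sigma_{t,T}v_j\bigr)_i=\sum_{l=0}^{p} b_{i\oplus l,t,T}\,W(l,x_j).
\end{equation}
Substituting $k=i\oplus l$ (a bijection of $\{0,\dots,p\}$ onto itself since $p=2^m-1$), using $l=i\oplus k$, together with property (ii) of the Walsh system $W(i\oplus k,x_j)=W(i,x_j)W(k,x_j)$, this collapses to $W(i,x_j)\,\phi_{t,T}(x_j)$. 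Hence $\Sigma_{t,T}v_j=\phi_{t,T}(x_j)\,v_j$, so each $v_j$ is an eigenvector with eigenvalue $\phi_{t,T}(x_j)$.

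Next I would verify linear independence of $\{v_0,\dots,v_p\}$ via the discrete orthogonality of Walsh functions at dyadic rationals: the matrix $H$ whose $(i,j)$-entry is $W(i,x_j)$ is (a scaled) Hadamard matrix satisfying $H^{\top}H=(p+1)I$, so $\det H\neq 0$. Consequently $\Sigma_{t,T}=H\,\mathrm{diag}(\phi_{t,T}(x_0),\dots,\phi_{t,T}(x_p))\,H^{-1}$, which immediately yields
\begin{equation}
\nonumber
\det[\Sigma_{t,T}]=\prod_{j=0}^{p}\phi_{t,T}(x_j).
\end{equation}

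For the final equivalence, I would note that each $W(k,x)$ with $0\le k\le p=2^m-1$ is constant on every dyadic interval $[x_j,x_{j+1})$ of length $1/(p+1)$, so the same holds for $\phi_{t,T}(x)$; thus the set of values $\{\phi_{t,T}(x_j):j=0,\dots,p\}$ exhausts the range of $\phi_{t,T}$ on $I$. Hence $\phi_{t,T}(x)\neq 0$ for all $x\in I$ is equivalent to all $\phi_{t,T}(x_j)\neq 0$, which by the determinantal formula is equivalent to $\det[\Sigma_{t,T}]\neq 0$. The main bookkeeping obstacle is only the verification that $i\oplus l$ remains in $\{0,\dots,p\}$ and that the substitution step is a bijection, both of which follow from $p+1$ being a power of two; everything else reduces to properties already recorded in Section~2.
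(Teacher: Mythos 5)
Your proposal is correct and follows essentially the same route as the paper: the eigenvector relation $\Sigma_{t,T}v_j=\phi_{t,T}(x_j)\,v_j$ is exactly the column-by-column form of the paper's identity $\Sigma_{t,T}H_W(m)=H_W(m)\,\mathrm{diag}[\phi_{t,T}(x_0),\ldots,\phi_{t,T}(x_p)]$, where $H_W(m)$ is the Walsh-ordered Hadamard matrix whose columns are your $v_j$, and both arguments rest on the same index bijection, the multiplicativity $W(i\oplus k,x)=W(i,x)W(k,x)$, and the piecewise constancy of $\phi_{t,T}$ on the dyadic intervals for the final equivalence.
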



\begin{lemma}
\label{S:tvDARMA:L:Invertible}
Assume that $\phi_{t,T}(x) \neq 0$ in \eqref{S:tvDARMA:E:Phi}. Then there exists a function $\eta_{t,T}(x)$, which is defined by
$$\eta_{t,T}(x)=\sum_{m=0}^p d_{m,t,T} W(m,x),\quad
\{d_{m,t,T}\}_{m=0,1,\ldots,p} \in \mathbb{R},\; x\in I,$$
and satisfies
$\phi_{t,T}(x)\eta_{t,T}(x)=1.$
The coefficients $d_{m,t,T}$ are uniquely determined by
$$\Sigma_{t,T}\left(
             \begin{array}{cccc}
               d_{0,t,T} & d_{1,t,T} & \cdots & d_{p,t,T} \\
             \end{array}
          \right)^\prime         =
          \left(
             \begin{array}{cccc}
            1 & 0 & \cdots & 0 \\
             \end{array}
          \right)^\prime.$$
\end{lemma}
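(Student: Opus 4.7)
The plan is to reduce the functional equation $\phi_{t,T}(x)\eta_{t,T}(x) = 1$ to a finite linear system in the unknown coefficients $d_{m,t,T}$ by exploiting the product identity $W(j,x)W(m,x)=W(j\oplus m,x)$, and then to invoke Lemma \ref{S:tvDARMA:L:Determinant} to guarantee solvability.

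First I would substitute the two Walsh expansions of $\phi_{t,T}(x)$ and the hypothesised $\eta_{t,T}(x)$ into the product and apply property (ii) of the Walsh functions to obtain
\begin{equation}
\nonumber
\phi_{t,T}(x)\eta_{t,T}(x)=\sum_{j=0}^{p}\sum_{m=0}^{p} b_{j,t,T}\,d_{m,t,T}\,W(j\oplus m,x).
\end{equation}
Because $p=2^{m}-1$, the set $\{0,1,\ldots,p\}$ is closed under the dyadic addition $\oplus$, and for each fixed $k\in\{0,\ldots,p\}$ the map $j\mapsto j\oplus k$ is a bijection of $\{0,\ldots,p\}$ onto itself with $j\oplus k\oplus k=j$. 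Collecting terms with $j\oplus m=k$ (so $m=j\oplus k$) gives
\begin{equation}
\nonumber
\phi_{t,T}(x)\eta_{t,T}(x)=\sum_{k=0}^{p}\Bigl(\sum_{j=0}^{p} b_{j,t,T}\,d_{j\oplus k,t,T}\Bigr)W(k,x).
\end{equation}

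Next I would equate this to $1=W(0,x)$. By orthonormality of the Walsh system on $I$, matching coefficients forces
\begin{equation}
\nonumber
\sum_{j=0}^{p}b_{j,t,T}\,d_{j\oplus k,t,T}=\delta_{k,0},\qquad k=0,1,\ldots,p,
\end{equation}
where $\delta$ is Kronecker's delta. Reindexing the sum on the left via $j'=j\oplus k$ shows that the $k$-th equation coincides with the $k$-th row of $\Sigma_{t,T}(d_{0,t,T},\ldots,d_{p,t,T})^{\prime}$, since the $(k,j')$-entry of $\Sigma_{t,T}$ is $b_{k\oplus j',t,T}$. Hence existence and uniqueness of the coefficients $d_{m,t,T}$ are equivalent to the invertibility of $\Sigma_{t,T}$.

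Finally, to close the argument I would appeal to Lemma \ref{S:tvDARMA:L:Determinant}: the assumption $\phi_{t,T}(x)\neq 0$ on $I$ implies $\phi_{t,T}(x_{j})\neq 0$ at the dyadic rationals $x_{j}=j/(p+1)$, so $\det[\Sigma_{t,T}]=\prod_{j=0}^{p}\phi_{t,T}(x_{j})\neq 0$. The linear system therefore has a unique solution, which yields the stated $\eta_{t,T}(x)$ with $\phi_{t,T}(x)\eta_{t,T}(x)=1$. The only subtlety I foresee is verifying carefully that the indices $j\oplus k$ really stay within $\{0,\ldots,p\}$ and that the matrix action is correctly aligned with the coefficient equations; this is precisely where the group-theoretic structure of $(\{0,\ldots,2^{m}-1\},\oplus)$ is essential and is the reason the lemma requires $p=2^{m}-1$.
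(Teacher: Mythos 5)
Your proposal is correct and follows essentially the same route as the paper's proof: expand the product using $W(j,x)W(m,x)=W(j\oplus m,x)$, use the closure of $\{0,\ldots,2^m-1\}$ under $\oplus$ to collect the coefficient of each $W(h,x)$, reduce $\phi_{t,T}(x)\eta_{t,T}(x)=1$ to the linear system $\Sigma_{t,T}(d_{0,t,T},\ldots,d_{p,t,T})^\prime=(1,0,\ldots,0)^\prime$, and invoke Lemma \ref{S:tvDARMA:L:Determinant} for invertibility. The only cosmetic difference is that you match Walsh coefficients directly via orthonormality (linear independence) of the Walsh system, whereas the paper evaluates the identity at the dyadic points $x_l=l/(p+1)$ and uses the Hadamard relation $H_W(m)H^\prime_W(m)=2^m I_{2^m}$; the two mechanisms are equivalent.
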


Define $S_{t,T}$ to be the $(r+1)\times (r+1)$ matrix

\begin{equation}
\label{S:tvDARMA:M:S matrix}
\nonumber
    S_{t,T}=\left(
      \begin{array}{cccc}
        a_{0\oplus 0,t,T} & a_{0\oplus 1,t,T} & \cdots & a_{0\oplus r,t,T} \\
        a_{1\oplus 0,t,T} & a_{1\oplus 1,t,T} & \cdots & a_{1\oplus r,t,T} \\
        \vdots & \vdots & \ddots & \vdots \\
        a_{r\oplus 0,t,T} & a_{r\oplus 1,t,T} & \cdots & a_{r\oplus r,t,T} \\
      \end{array}
    \right).
\end{equation}

The following theorem states that a tvDARMA process can be approximated locally by a tvDMA and a tvDAR process.

\begin{theorem}
\label{S:tvDARMA:T:tvDARMA representation}
Suppose that $\{X_{t,T},t=1,2,\ldots,T\}$ is a tvDARMA$(p,r)$ as in \eqref{S:tvDARMA:E:tvDARMA rescaled}. Set $\mu=\max(p,r)$.
Then the following hold \\
(i) If $\det[\Sigma_{t,T}]\neq 0,$ then $X_{t,T}$ can be approximated locally by a tvDMA$(\mu)$ process.
\\
(ii) If $\det[S_{t,T}]\neq 0,$ then $X_{t,T}$ can be approximated locally by a tvDAR$(\mu)$ process.
\end{theorem}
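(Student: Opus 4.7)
The plan is to work locally at each rescaled point $u_0\in I$. I freeze the coefficient curves to obtain a dyadic stationary DARMA, compute its Walsh transfer function explicitly by inverting the AR (respectively MA) polynomial via Lemma \ref{S:tvDARMA:L:Invertible}, and recognise the resulting transfer function as that of a DMA$(\mu)$ (respectively AR$(\mu)$). Theorem \ref{S:LDS:T:approximation by DS process} then delivers the claimed $O_P(1/T)$ local approximation of $X_{t,T}$ by the corresponding tvDMA$(\mu)$ or tvDAR$(\mu)$.

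For part (i), freeze the coefficients at $u_0$ and consider the dyadic stationary $\tilde X_t(u_0)$ satisfying $\sum_{k=0}^p b_k(u_0)\tilde X_{t\oplus k}(u_0)=\sum_{n=0}^r a_n(u_0)\varepsilon_{t\oplus n}$. Inserting the dyadic spectral representations on both sides and using $W(t\oplus k,x)=W(t,x)W(k,x)$ pulls everything under a single Walsh integral and gives the identity $\phi_{u_0}(x)A(u_0,x)=\theta_{u_0}(x)$, with $\phi_{u_0}(x)=\sum_{k=0}^p b_k(u_0)W(k,x)$ and $\theta_{u_0}(x)=\sum_{n=0}^r a_n(u_0)W(n,x)$. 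Lemma \ref{S:tvDARMA:L:Determinant} together with $\det[\Sigma_{t,T}]\neq 0$ forces $\phi_{u_0}(x)\neq 0$, so Lemma \ref{S:tvDARMA:L:Invertible} produces $\eta_{u_0}(x)=\sum_{\ell=0}^p d_\ell(u_0)W(\ell,x)$ with $\phi_{u_0}(x)\eta_{u_0}(x)=1$. Hence
\[
A(u_0,x)=\theta_{u_0}(x)\eta_{u_0}(x)=\sum_{n=0}^{r}\sum_{\ell=0}^{p}a_n(u_0)d_\ell(u_0)\,W(n\oplus\ell,x).
\]
Because $p=2^m-1$ and $r=2^f-1$, every dyadic sum $n\oplus\ell$ is bounded by $2^{\max(m,f)}-1=\mu$, so collecting terms gives $A(u_0,x)=\sum_{j=0}^\mu c_j(u_0)W(j,x)$, precisely the transfer function of a DMA$(\mu)$ at $u_0$. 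Setting $c_{j,t,T}:=c_j(t/T)$ defines a tvDMA$(\mu)$ whose frozen transfer function at $u_0$ coincides with that of $X_{t,T}$; Theorem \ref{S:LDS:T:approximation by DS process} applied to both processes combined with the triangle inequality yields the desired local approximation.

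Part (ii) is entirely symmetric. Under $\det[S_{t,T}]\neq 0$, Lemma \ref{S:tvDARMA:L:Invertible} applied to $\theta_{u_0}(x)$ yields $\zeta_{u_0}(x)=\sum_{n=0}^r e_n(u_0)W(n,x)$ with $\theta_{u_0}(x)\zeta_{u_0}(x)=1$; the product $\tilde\phi_{u_0}(x):=\phi_{u_0}(x)\zeta_{u_0}(x)=\sum_{j=0}^\mu \tilde b_j(u_0)W(j,x)$ (again supported on indices $\le\mu$ by the same dyadic-sum bound) then satisfies $\tilde\phi_{u_0}(x)A(u_0,x)=1$, which is the spectral identity of a dyadic stationary AR$(\mu)$. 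Lifting the $\tilde b_j(u_0)$ to curves $\tilde b_j(t/T)$ and reapplying Theorem \ref{S:LDS:T:approximation by DS process} completes the second part. The main technical obstacle is propagating regularity: the derived coefficients $c_j(u)$ and $\tilde b_j(u)$ must satisfy condition (ii) of Definition \ref{S:LDS:D:LDS}, which requires uniform-in-$u$ non-vanishing bounds on $\phi_u(x)$ and $\theta_u(x)$ so that the Walsh inverses $\eta_u(x)$ and $\zeta_u(x)$ depend continuously on $u$; this uniformity is what promotes the pointwise hypotheses $\det[\Sigma_{t,T}]\neq 0$ and $\det[S_{t,T}]\neq 0$ into a genuine local approximation bound.
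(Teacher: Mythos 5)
Your proof is correct in its essentials and rests on the same pillars as the paper's: the determinant criterion of Lemma \ref{S:tvDARMA:L:Determinant} to guarantee that the Walsh polynomial $\phi$ does not vanish, the Walsh-domain inversion of Lemma \ref{S:tvDARMA:L:Invertible}, and the observation that a product of Walsh polynomials supported on $\{0,\dots,p\}$ and $\{0,\dots,r\}$ is supported on $\{0,\dots,\mu\}$ because $n\oplus\ell\le 2^{\max(m,f)}-1=\mu$. Where you diverge is in how the word ``locally'' is discharged. The paper never freezes the coefficients: it substitutes the time-varying spectral representations directly into \eqref{S:tvDARMA:E:tvDARMA rescaled}, at which point the left-hand side involves the shifted transfer functions $A_{t\oplus k,T}(x)$ rather than $A_{t,T}(x)$; the bulk of its proof is the estimate of $|\mathrm{LHS}-\mathrm{LHS}^{\prime}|$ showing that this shift costs only $O(1/T)+\varepsilon^{\prime}$, after which it inverts the time-varying polynomial $\phi_{1,t,T}$ itself and reads off time-varying coefficients $K_{j,t,T}$ directly. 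You instead detour through the frozen stationary DARMA at $u_0$ and then invoke Theorem \ref{S:LDS:T:approximation by DS process} twice together with a triangle inequality. Your route is tidier, but it quietly assumes that the transfer function $\theta_{u_0}\eta_{u_0}$ of the frozen equation coincides with the limit function $A(u_0,\cdot)$ attached to $X_{t,T}$ by condition (ii) of Definition \ref{S:LDS:D:LDS}; that identification is precisely what the paper's $\mathrm{LHS}/\mathrm{LHS}^{\prime}$ computation supplies (by passing the difference equation to the rescaled limit), so it should not be treated as free. On the other hand, your closing remark --- that the derived coefficient curves $c_j(u)$ and $\tilde b_j(u)$ inherit continuity only if $\phi_u$ and $\theta_u$ are bounded away from zero uniformly in $u$ --- is a genuine point that the paper glosses over, since its hypotheses $\det[\Sigma_{t,T}]\neq 0$ and $\det[S_{t,T}]\neq 0$ are only pointwise; in that one respect your version is the more careful of the two.
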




\begin{corollary}
\label{S:tvDARMA:C:tvDAR represented as tv DMA}
Suppose that $\{X_{t,T},t=1,2,\ldots,T\}$ is a tvDAR$(p)$ as in \eqref{S:tvDARMA:E:tvDARp rescaled}. If $\det[\Sigma_{t,T}]\neq 0,$ $X_{t,T}$ can be approximated locally by a tvDMA$(p)$ process.
\end{corollary}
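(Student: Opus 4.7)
The plan is to build the tvDMA$(p)$ approximant by inverting the autoregressive polynomial $\phi_{t,T}(x)$ in the Walsh-spectral domain, using Lemmas~\ref{S:tvDARMA:L:Determinant} and \ref{S:tvDARMA:L:Invertible} pointwise in $u = t/T$, and then to transfer the resulting dyadic stationary MA representation back to $X_{t,T}$ via the local approximation machinery of Theorem~\ref{S:LDS:T:approximation by DS process}. This is essentially the purely autoregressive specialisation ($r=0$) of Theorem~\ref{S:tvDARMA:T:tvDARMA representation}(i); I present it directly because the formal tvDARMA definition requires $r = 2^{f}-1$ with $f \ge 1$.

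Fix $u_{0}\in I$ and introduce the dyadic stationary autoregressive approximant $\tilde{X}_{t}(u_{0})$ defined by $\sum_{k=0}^{p}b_{k}(u_{0})\,\tilde{X}_{t\oplus k}(u_{0})=\varepsilon_{t}$, where $b_{k}(\cdot)$ is the continuous rescaling of $b_{k,t,T}$. Lemma~\ref{S:tvDARMA:L:Determinant} equates $\det[\Sigma_{t,T}]\neq 0$ with $\phi_{t,T}(x)\neq 0$, and continuity of the $b_{k}(\cdot)$'s transfers this to $\phi(u_{0},x)=\sum_{k=0}^{p}b_{k}(u_{0})W(k,x)\neq 0$. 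Lemma~\ref{S:tvDARMA:L:Invertible} then furnishes the unique dual polynomial $\eta(u_{0},x)=\sum_{m=0}^{p}d_{m}(u_{0})W(m,x)$ with $\phi(u_{0},x)\eta(u_{0},x)=1$; equivalently, in the Walsh spectral domain the transfer function of $\tilde{X}_{t}(u_{0})$ is $A(u_{0},x)=1/\phi(u_{0},x)=\eta(u_{0},x)$, so by property (ii) of the Walsh functions
\begin{equation*}
\tilde{X}_{t}(u_{0}) = \int_{0}^{1} W(t,x)\,\eta(u_{0},x)\,dZ_{\varepsilon}(x) = \sum_{m=0}^{p} d_{m}(u_{0})\,\varepsilon_{t\oplus m},
\end{equation*}
which is a genuine DMA$(p)$.

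To relate $X_{t,T}$ itself to $\tilde{X}_{t}(u_{0})$, identify the time-varying transfer function of $X_{t,T}$ as $A_{t,T}(x)=1/\phi_{t,T}(x)$; the rescaling assumption on $b_{k,t,T}$ combined with Cramer's rule on $\Sigma_{t,T}$ yields $\sup_{t,x}|A_{t,T}(x)-A(t/T,x)| = O(1/T)$, and continuity of $A(u,x)$ in $u$ is inherited from that of the $b_{k}(\cdot)$'s. Assumption (ii) of Definition~\ref{S:LDS:D:LDS} is therefore satisfied for the inverted representation, and Theorem~\ref{S:LDS:T:approximation by DS process} delivers $|X_{t,T}-\tilde{X}_{t}(u_{0})|=O_{P}(1/T)$ on each neighbourhood $|t/T-u_{0}|\leq\epsilon$. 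Hence the tvDMA$(p)$ process $\sum_{m=0}^{p}d_{m}(t/T)\,\varepsilon_{t\oplus m}$ is the desired local approximation.

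The main obstacle is making the inversion step uniform, i.e.\ ensuring $|\phi_{t,T}(x)|$ (equivalently $|\det[\Sigma_{t,T}]|$) is bounded away from zero uniformly in $t$, $T$ and $x$. Under such a uniform lower bound, Cramer's rule shows that each $d_{m,t,T}$ is a Lipschitz function of $(b_{0,t,T},\ldots,b_{p,t,T})$; consequently the smoothness and $O(1/T)$ rescaling approximation assumed for the $b_{k,t,T}$'s is inherited by the $d_{m,t,T}$'s, legitimising the application of Theorem~\ref{S:LDS:T:approximation by DS process} to the MA-side transfer function $1/\phi_{t,T}$. Without this uniformity the order bound on the remainder in \eqref{S:LDS:E:Definition, assumption (ii)} cannot be obtained and the local tvDMA$(p)$ representation fails to satisfy Definition~\ref{S:LDS:D:LDS}.
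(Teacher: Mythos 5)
Your proposal rests on the same two pillars as the paper's proof --- Lemma \ref{S:tvDARMA:L:Determinant} to convert $\det[\Sigma_{t,T}]\neq 0$ into $\phi_{t,T}(x)\neq 0$, and Lemma \ref{S:tvDARMA:L:Invertible} to produce the dual polynomial whose coefficients become the moving-average coefficients --- but your route differs in one respect: you freeze the coefficients at $u_0$, build the dyadic stationary DMA$(p)$ process $\tilde X_t(u_0)=\sum_{m=0}^p d_m(u_0)\varepsilon_{t\oplus m}$, and then invoke Theorem \ref{S:LDS:T:approximation by DS process}; the paper instead inverts the time-varying polynomial $\phi_{t,T}$ directly and arrives (up to a controlled error) at $X_{t,T}=\sum_{m=0}^p d_{m,t,T}\,\varepsilon_{t\oplus m}$, a tvDMA$(p)$ whose coefficients still depend on $(t,T)$. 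Both readings of ``approximated locally by a tvDMA$(p)$'' are defensible, and your closing observation that $|\phi_{t,T}(x)|$ must be bounded away from zero uniformly, so that Cramer's rule transfers the smoothness and the $O(1/T)$ rescaling bound from the $b$'s to the $d$'s, is a genuine point that the paper leaves implicit.

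There is, however, one step you assert without justification, and it is precisely the step the paper spends most of its effort on: the identification $A_{t,T}(x)=1/\phi_{t,T}(x)$. Substituting the spectral representation into the defining equation gives
\begin{equation*}
\sum_{k=0}^p b_{k,t,T}X_{t\oplus k,T}=\int_0^1 W(t,x)\Bigl(\sum_{k=0}^p b_{k,t,T}\,A_{t\oplus k,T}(x)\,W(k,x)\Bigr)dU(x),
\end{equation*}
and the inner sum is \emph{not} $\phi_{t,T}(x)A_{t,T}(x)$, because each lagged observation carries its own transfer function $A_{t\oplus k,T}$. The paper handles this by introducing the surrogate $\textrm{LHS}^{\prime}$ of \eqref{TM:S:Appendix:E:LHS.prime}, bounding $|A_{t\oplus k,T}(x)-A_{t,T}(x)|$ via assumption \eqref{S:LDS:E:Definition, assumption (ii)} together with the continuity of $A(u,x)$ in $u$, and only then equating $\textrm{LHS}^{\prime}$ with the right-hand side before applying Lemma \ref{S:tvDARMA:L:Invertible}. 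Your argument needs this replacement step (or an equivalent control of the discrepancy) before the factorisation, and hence the inversion, is legitimate; with it inserted, the rest of your proof goes through.
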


Proofs of these results are given in the appendix.

\section{Conclusions}
We anticipate that the above results will be useful for future work in the field of applications. In this direction, the concepts of Walsh spectrum and Walsh transform will be studied.
The Walsh spectrum for a real-valued dyadic stationary process $X_t$ is defined by
\begin{equation}
\label{S:LDS:SS:WT:E:defintion of WS}
    f(x)=\sum_{\tau=0}^{\infty} R(\tau)W(\tau,x),\quad 0\leq x<\infty,
\end{equation}
where the covariance function $R(\cdot)$ satisfies
$\sum_{\tau=0}^{\infty} |R(\tau)|<\infty$
(see e.g.
\citet{morettin1974walsh,morettin1978estimation,morettin1981walsh}).
Inverting
\eqref{S:LDS:SS:WT:E:defintion of WS},
the covariance is given by
$$R(\tau)=\int_0^1 W(\tau,x)f(x)dx.$$

The finite Walsh transform is given by
\begin{equation}
\label{S:LDS:SS:WT:E:defintion of WT}
\nonumber    d^{\:(N)}(x)=\sum_{n=0}^{N-1} X_n W(n,x), \quad x\in I.
\end{equation}

To estimate the Walsh spectrum,
\citet{morettin1981walsh}
defined the Walsh periodogram, by
$$I^{(N)}(x)=N^{-1} [d^{\:(N)}(x)]^2,$$
and showed that $I^{(N)}(x)$ is asymptotically an unbiased, but inconsistent, estimator of $f(x).$
He also considered the smooth Walsh periodogram and other classes of estimates.
Dyadic stationarity is necessary to estimate the time-varying  Walsh spectrum.
Therefore, in the case of local dyadic stationarity, it would be reasonable to divide the rescaled interval $I$ into subintervals and estimate the Walsh spectrum within each subinterval, where local dyadic stationarity is satisfied. The number of the observations within each subinterval $\{ u=t/T:|t/T-u_0|\leq \epsilon \}$ increases as $T$ tends to infinity and the above asymptotical results still hold. A similar method is applied by
\citet{dahlhaus1998segment}
for real-time stationary processes.

\citet{kohnI1980spectral,kohnII1980spectral} studied the system of Walsh functions for real time stationary processes. He defined the $j$-$th$ logical autocovariance, $\tau(j)$, and the corresponding  
Walsh-Fourier spectral density function $F(x)$. This notation replaces the previous notation used for $R(\cdot)$ and $f(\cdot)$ above. He considered the finite Walsh-Fourier transform  
and studied its asymptotic properties. A class of estimators for $F(x)$ was obtained, the average Walsh periodogram being a member of this class. The concept of local stationarity could also be applied in the real time setting and we conjecture that similar results could be obtained also in this case.

\section*{Acknowledgements}
We cordially thank Prof. I. Nikiforov and four anonymous reviewers for several constructive comments that improved considerably an earlier version of the manuscript.

\newpage

\section*{Appendix}
\label{TM:S:Appendix}

\renewcommand{\theequation}{A-\arabic{equation}}
\setcounter{equation}{0}
\renewcommand{\thelemma}{A-\arabic{lemma}}

\paragraph{Proof of Lemma \ref{S:tvDARMA:L:Determinant}}
Recall that $p=2^m-1$. The Walsh-ordered Hadamard matrix $H_W(m)$ is a $(2^m \times 2^m)$ matrix with elements of the form $W(n,x_j),\; x_j=j/(p+1),\;j,n=0,1,\ldots, p$, see also
\citet{stoffer1991walsh}.
Then the following relations hold:

\begin{eqnarray}
\label{TM:S:Appendix:E:lemma1 proof}
\nonumber
\Sigma_{t,T} H_W(m) &=& \left(
      \begin{array}{cccc}
        b_{0\oplus 0,t,T} & b_{0\oplus 1,t,T} & \cdots & b_{0\oplus p,t,T} \\
        b_{1\oplus 0,t,T} & b_{1\oplus 1,t,T} & \cdots & b_{1\oplus p,t,T} \\
        \vdots & \vdots & \ddots & \vdots \\
        b_{p\oplus 0,t,T} & b_{p\oplus 1,t,T} & \cdots & b_{p\oplus p,t,T} \\
      \end{array}
    \right)  \cdot
    \left( \begin{array}{cccc}
        W(0,x_0) & W(0,x_1) & \cdots & W(0,x_p) \\
        W(1,x_0) & W(1,x_1) & \cdots & W(1,x_p) \\
        \vdots & \vdots & \ddots & \vdots \\
        W(p,x_0) & W(p,x_1) & \cdots & W(p,x_p) \\
      \end{array}    \right)                        \\
\nonumber   &=& \left\{   \sum_{l=0}^p b_{(i-1)\oplus l,t,T} W(l,x_{j-1})   \right\}_{(i,j)}
             =  \bigg\{   \phi_{t,T}(x_{j-1}) W(i-1,x_{j-1})   \bigg\}_{(i,j)}   \\
            &=& H_W(m) \cdot \textrm{diag}[\phi_{t,T}(x_0), \phi_{t,T}(x_1), \ldots, \phi_{t,T}(x_p)].
\end{eqnarray}
But, since $\det [H_W(m)] = (p+1)^{(p+1)/2} \neq 0$, we get from \eqref{TM:S:Appendix:E:lemma1 proof} that
$$\det [\Sigma_{t,T}] = \det\big[ \textrm{diag}(\phi_{t,T}(x_0), \phi_{t,T}(x_1), \ldots, \phi_{t,T}(x_p)) \big]
    = \prod _{j=0}^p   \phi_{t,T}(x_j).$$

For the second argument of Lemma \ref{S:tvDARMA:L:Determinant}, note that
$\phi_{t,T}(x)=\sum_{j=0}^p b_{j,t,T} W(j,x),\, x\in I$
and every Walsh function $W(n,x),\, n=0,1,\ldots,p$ remains invariant for $x \in [x_j,x_{j+1}), \, x_j=j/(p+1),\;j,n=0,1,\ldots, p$ and equal to $W(n,x_j)$.
Therefore $\phi_{t,T}(x)=\phi_{t,T}(x_j), \; \forall x \in [x_j,x_{j+1}).$

\paragraph{Proof of Lemma \ref{S:tvDARMA:L:Invertible}}
\begin{eqnarray}
\label{TM:S:Appendix:E:lemma2 proof 1}
\nonumber   \phi_{t,T}(x)\eta_{t,T}(x) &=& \left(\sum_{l=0}^p b_{l,t,T} W(l,x)\right) \left(\sum_{m=0}^p d_{m,t,T} W(m,x)\right) =
            \sum_{l=0}^p \sum_{m=0}^p b_{l,t,T} \, d_{m,t,T} W(l\oplus m,x)     \\
   &=& \sum_{h=0}^p \Bigg [\sum_{j=0}^p b_{j\oplus h,t,T} \, d_{j,t,T} \Bigg] W(h,x).
\end{eqnarray}
In order for $\phi_{t,T}(x)\eta_{t,T}(x)=1$ to hold in $I$, we have from equation \eqref{TM:S:Appendix:E:lemma2 proof 1} that
$$\sum_{h=0}^p \Bigg [\sum_{j=0}^p b_{j\oplus h,t,T} \, d_{j,t,T} \Bigg] W(h,x_l)=1, \quad l=0,1,\ldots,p,$$
which is equivalently written in matrix notation as
\begin{equation}
\label{TM:S:Appendix:E:lemma2 proof 2}
H^\prime_W(m)
\left(  \begin{array}{c}
               \sum_{j=0}^p b_{j,t,T} \, d_{j,t,T} \\
               \sum_{j=0}^p b_{j\oplus 1,t,T} \, d_{j,t,T} \\
               \vdots \\
               \sum_{j=0}^p b_{j\oplus p,t,T} \, d_{j,t,T}
             \end{array}    \right)         =
\left(  \begin{array}{c}
               1 \\
               1 \\
               \vdots \\
               1
             \end{array}    \right).
\end{equation}
But $H_W(m)H^\prime_W(m)=2^m I_{2^m}.$ Hence, since from assumption we have that $\det\left[\Sigma_{t,T}\right]\neq 0$,
equation \eqref{TM:S:Appendix:E:lemma2 proof 2} gives that
\begin{eqnarray}
\nonumber    2^m I_{2^m}
        \left(  \begin{array}{c}
               \sum_{j=0}^p b_{j,t,T} \, d_{j,t,T} \\
               \sum_{j=0}^p b_{j\oplus 1,t,T} \, d_{j,t,T} \\
               \vdots \\
               \sum_{j=0}^p b_{j\oplus p,t,T} \, d_{j,t,T}
             \end{array}    \right)         &=&
H_W(m)  \left(  \begin{array}{c}
               1 \\
               1 \\
               \vdots \\
               1
             \end{array}    \right)
             = 2^m
        \left(  \begin{array}{c}
               1 \\
               0 \\
               \vdots \\
               0
             \end{array}    \right)     \Longrightarrow
\left(  \begin{array}{c}
               d_{0,t,T} \\
               d_{1,t,T} \\
               \vdots \\
               d_{p,t,T}
             \end{array}    \right)     = \Sigma_{t,T}^{-1}
             \left(  \begin{array}{c}
               1 \\
               0 \\
               \vdots \\
               0
             \end{array}    \right).
\end{eqnarray}

\paragraph{Proof of Theorem \ref{S:tvDARMA:T:tvDARMA representation}}
Since $X_{t,T}$  is locally dyadic stationary it has a Walsh spectral representation
$$X_{t,T}=\int_0^1 W(t,x) A_{t,T}(x) dU(x),$$
while $\varepsilon_t$ is dyadic stationary and  represented by
$$\varepsilon_t=\int_0^1 W(t,x) dZ_\varepsilon(x).$$

Then the LHS and RHS of equation \eqref{S:tvDARMA:E:tvDARMA rescaled} can be written as

\begin{eqnarray}
\label{TM:S:Appendix:E:LHS}
\textrm{LHS} &=& \sum_{k=0}^p b_{k,t,T} X_{t\oplus k,T} = \int_0^1 W(t,x) \left(\sum_{k=0}^p b_{k,t,T} A_{t\oplus k,T}(x)W(k,x) \right) dU(x),
\end{eqnarray}
and
\begin{eqnarray}
\label{TM:S:Appendix:E:RHS}
\nonumber
\textrm{RHS} &=& \sum_{n=0}^r a_{n,t,T} \varepsilon_{t\oplus n} = \int_0^1 W(t,x) \left(\sum_{n=0}^r a_{n,t,T}W(n,x) \right) dZ_\varepsilon(x).
\end{eqnarray}

\noindent
Set
\begin{equation}
\label{TM:S:Appendix:E:LHS.prime}
\textrm{LHS}^{\prime}=\int_0^1 W(t,x) \left(\sum_{k=0}^p b_{k,t,T} A_{t,T}(x)W(k,x) \right) dU(x).
\end{equation}

\noindent
Then
\begin{eqnarray}
\label{TM:S:Appendix:E:LHS.LHS.prime}
\nonumber |\textrm{LHS}-\textrm{LHS}^{\prime}| &=&
\int_0^1 |W(t,x)| \left(\sum_{k=0}^p |b_{k,t,T}| \cdot |A_{t\oplus k,T}(x)-A_{t,T}(x)| \cdot |W(k,x)| \right) dU(x) \\
&=& \int_0^1 \left(\sum_{k=0}^p |b_{k,t,T}| \cdot |A_{t\oplus k,T}(x)-A_{t,T}(x)| \right) dU(x).
\end{eqnarray}
\noindent
Consider the interval
$A=\{ \left| (t\oplus k/T)-t/T \right| \leq \varepsilon \}$.
Then, $\forall \varepsilon$, we can assume that $T$ is large enough, such that $t\oplus k/T \in A, \, \forall \, k=0,1,\ldots, p.$ From the continuity of the function $A(t/T,\cdot)$ and assumption \eqref{S:LDS:E:Definition, assumption (ii)} we have that
\begin{eqnarray}
\label{TM:S:Appendix:E:1st dif}
\nonumber |A_{t\oplus k,T}(x)-A_{t,T}(x)| &=&
\left|A_{t\oplus k,T}(x)-A\left(\frac{t\oplus k}{T},x\right)+A\left(\frac{t\oplus k}{T},x\right)-A_{t,T}(x)\right| 	\\
\nonumber &\leq&
\left| A_{t\oplus k,T}(x)-A\left(\frac{t\oplus k}{T},x\right) \right| +
\left| A\left(\frac{t\oplus k}{T},x\right)-A_{t,T}(x) \right|	\\
\nonumber &\leq&
\frac{K}{T}+
\left| A\left(\frac{t\oplus k}{T},x\right) - A\left(\frac{t}{T},x\right)\right| +
\left|A\left(\frac{t}{T},x\right) - A_{t,T}(x) \right|
\\
&\leq&
\frac{2K}{T}+\varepsilon^{\prime}=\varepsilon^{\prime\prime}.
\end{eqnarray}

From \eqref{TM:S:Appendix:E:LHS.LHS.prime} and \eqref{TM:S:Appendix:E:1st dif}, we have that

\begin{eqnarray}
\label{TM:S:Appendix:E:LHS-LHS.prime}
\nonumber
|\textrm{LHS}-\textrm{LHS}^{\prime}| &\leq& \varepsilon^{\prime\prime} \sum_{k=0}^p |b_{k,t,T}| \int_0^1dU(x)
\leq \varepsilon^{\prime\prime\prime},
\end{eqnarray}
since $\sum_{k=0}^p |b_{k,t,T}|<M^{\prime}$ and $\int_0^1dU(x)<M^{\prime\prime}$, with $M^{\prime}$ and $M^{\prime\prime}$ real constants.

Set
$\phi_{1,t,T}(x)=\sum_{k=0}^p b_{k,t,T}W(k,x)$
and
$\phi_{2,t,T}(x)=\sum_{n=0}^r a_{n,t,T}W(n,x).$
Assume that $\textrm{LHS}^{\prime}=\textrm{RHS}.$
Then, since the system of Walsh functions is complete and equation \eqref{S:tvDARMA:E:tvDARMA rescaled} holds for $t=1,2,\ldots,T$, we have that
$\phi_{1,t,T}(x) A_{t,T}(x) dU(x) = \phi_{2,t,T}(x) dZ_\varepsilon(x).$   \\
(i) Since $\det[\Sigma_{t,T}]\neq 0,$ from Lemma \ref{S:tvDARMA:L:Determinant} we have that  $\phi_{1,t,T}(x)\neq 0$ and from Lemma \ref{S:tvDARMA:L:Invertible} there exists a function $\eta_{1,t,T}(x)=\sum_{k=0}^p g_{k,t,T} W(k,x)$ such that
\begin{eqnarray}
\nonumber A_{t,T}(x) dU(x) &=& \eta_{1,t,T}(x) \phi_{2,t,T}(x)dZ_\varepsilon(x)  \\
&=&\Bigg\{
\begin{array}{cc}
\sum_{j=0}^\mu \Big( \sum_{l=0}^p g_{l,t,T}\:a_{l\oplus j,t,T} \Big) W(j,x) dZ_\varepsilon(x),& p\leq r, \\
\nonumber \sum_{j=0}^\mu \Big( \sum_{l=0}^r g_{l\oplus j,t,T}\:a_{l,t,T} \Big) W(j,x) dZ_\varepsilon(x),& p>r.
\end{array}
\end{eqnarray}
Hence,
$$\nonumber A_{t,T}(x)dU(x) =   \sum_{j=0}^\mu K_{j,t,T}W(j,x) dZ_\varepsilon(x),$$
where
\begin{eqnarray}
\label{S:tvDARMA:E:definition of Ks}
\nonumber K_{j,t,T} &=& \Bigg\{
\begin{array}{cc}
\sum_{s=0}^p  g_{s,t,T}\:a_{s\oplus j,t,T},& p\leq r, \\
\sum_{s=0}^r  g_{s\oplus j,t,T}\:a_{s,t,T},& p>r,
\end{array} \quad j=0,1,\ldots,p.
\end{eqnarray}
Therefore,
$$X_{t,T} = \sum_{j=0}^\mu  K_{j,t,T}  \int_0^1 W(t\oplus j,x) dZ_\varepsilon(x) = \sum_{j=0}^\mu  K_{j,t,T} \varepsilon_{t\oplus j}.$$     \\
(ii) Similarly with (i).

\paragraph{Proof of Corollary \ref{S:tvDARMA:C:tvDAR represented as tv DMA}}
Suppose that $X_{t,T}=\int_0^1 A_{t,T}(x)W(t,x) dU(x).$
The LHS of \eqref{S:tvDARMA:E:tvDARp rescaled} is
given by \eqref{TM:S:Appendix:E:LHS} and the $\textrm{LHS}^{\prime}$ by
\eqref{TM:S:Appendix:E:LHS.prime}.
From Lemma \ref{S:tvDARMA:L:Invertible}, since by assumption $\det[\Sigma_{t,T}]\neq 0,$
$\exists \; \eta_{t,T}(x)=\sum_{m=0}^p d_{m,t,T} W(m,x),\; d_{m,t,T}\in \mathbb{R},$ such that $\phi_{1,t,T}(x) \eta_{t,T}(x)=1.$
Therefore, and since the system of Walsh functions is complete, we have that
$$A_{t,T}(x)dU(x)= \eta_{t,T}(x) dZ_\varepsilon(x)= \sum_{m=0}^p d_{m,t,T} W(m,x) dZ_\varepsilon(x).$$
Hence, $X_{t,T}=\sum_{m=0}^p d_{m,t,T} \int_0^1 W(t\oplus m,x) dZ_\varepsilon(x)
\nonumber   = \sum_{m=0}^p d_{m,t,T} \varepsilon_{t\oplus m}.$

\newpage

\end{document}